\newcommand{\ud}{\mathrm{d}}
\newcommand{\nn}{\mathbf{n}}
\renewcommand{\tt}{\mathbf{t}}
\newcommand{\ii}{\mathrm{i}}
\newcommand{\D}{\mathcal{D}}
\newcommand{\Hmin}{H_{\operatorname{min}}}
\newcommand{\C}{\mathbb C}
\newcommand{\N}{\mathbb N}
\newcommand{\R}{\mathbb R}
\theoremstyle{plain}
\newtheorem{theorem}{Theorem}[section]
\newtheorem{corollary}[theorem]{Corollary}
\newtheorem{proposition}[theorem]{Proposition}
\theoremstyle{definition}
\newtheoremstyle{iremark}
	{6pt}
	{4pt}
	{\upshape}
	{0pt}
	{\itshape}
	{.}
	{5pt plus 1pt minus 1 pt}
	{\thmname{#1} \thmnumber{\itshape#2}\thmnote{(#3)}}
\theoremstyle{iremark}
\newtheorem{remark}[theorem]{Remark}
\newtheorem*{remark*}{Remark}
\numberwithin{equation}{section}
\begin{document}

\title{Dirac-Coulomb Operators with Infinite Mass Boundary
Conditions in Sectors}
\author[B.~Cassano]{Biagio Cassano}
\address[B.~Cassano]{Dipartimento di Matematica e Fisica, Università degli studi della Campania \\ Viale Lincoln, 5 \\ 
Caserta 81100  (Italy).}
\email{biagio.cassano@unicampania.it}
\author[M.~Gallone]{Matteo Gallone}
\address[M.~Gallone]{Mathematics Department ``F.~Enriques'', University of Milan \\ via C.~Saldini 50 \\ Milano 20133 (Italy).}
\email{matteo.gallone@unimi.it}
\author[F.~Pizzichillo]{Fabio Pizzichillo}
\address[F.~Pizzichillo]{Departamento de Matemática Aplicada y Ciencias de la Computación,
Universidad de Cantabria.\\
E.~T.~S.~de Ingenieros Industriales y de Telecomunicación,
Avenida de Los Castros 44,
Santander 39005 (Spain).}
\email{fabio.pizzichillo@unican.es}

%\dedicatory{}

\begin{abstract}
We investigate the properties of self-adjointness of a two-dimensional Dirac operator on an infinite sector with infinite mass boundary conditions and in presence of a Coulomb-type potential with the singularity placed on the vertex. In the general case, we prove the appropriate Dirac-Hardy inequality and exploit the Kato-Rellich theory. In the explicit case of a Coulomb potential, we describe the self-adjoint extensions for all the intensities of the potential relying on a radial decomposition in partial wave subspaces adapted to the infinite-mass boundary conditions. Finally, we integrate our results giving a description of the spectrum of these operators.

\end{abstract}

\date{\today}

\subjclass[2020]{Primary: 81Q10; Secondary: 47N20,  47N50, 47B25.}
\keywords{Dirac operator, infinite mass, boundary conditions, Coulomb potential, Hardy inequality, self-adjoint operator, corner domains.}

\maketitle

\vspace{-1cm} %%%%%%%%%%%% TO REMOVE

%\tableofcontents

\section{Introduction}

%\biagio{Un appunto. I remark hanno la scritta "Remark" in grassetto. \`E possibile metterla in corsivo?}

In this paper, we are interested in the two dimensional Dirac operator on an infinite sector, subject to {infinite mass} boundary conditions, in presence of a singular potential of Coulomb type, centred in the corner of the sector. 
The descriptions of the self-adjointness and spectral properties of the Dirac operator in a sector with infinite mass boundary conditions and of the Dirac operator with a Coulomb-type perturbation respectively are well understood, but a detailed analysis of the coupling of the two features is missing. It is interesting to describe their interaction since the two share the same singular nature: this is particularly evident in the case of an explicit Coulomb perturbation, see \Cref{rmk:singular}.

The Dirac operator was introduced in \cite{dirac} as the Hamiltonian generating the evolution of a relativistic particle with spin $\tfrac12$
on the whole three dimensional space and its analysis has been the subject of many investigations (see the monography \cite{thaller}). Parallel to that, it has found many other applications both for quark models in the atomic nucleus or, in its two-dimensional version, in the analysis of materials with Dirac fermion low-energy excitations, the most famous being certainly graphene, see e.g.~\cite{DiracMatRev} for a review. For these models, it is physically meaningful to consider the operator on some domain with boundaries to model either the confining property of quarks or the edge of a material. From a mathematical point of view, the introduction of boundaries requires that appropriate boundary conditions have to be considered in order to preserve self-adjointness.

The free Dirac operator in two spatial dimensions is given by the formal expression
\[
	D_0\;:=\;- \ii \boldsymbol{\sigma} \cdot \nabla+m
	\sigma_3=
	\begin{pmatrix}
		m									&	-\ii(\partial_{x_1}-\ii\partial_{x_2})\\
		-\ii(\partial_{x_1}-\ii\partial_{x_2})	&	-m	
	\end{pmatrix},
\]
where $m\geq 0$ is the mass of the particle and $\boldsymbol{\sigma}:=(\sigma_1,\sigma_2)$, being $\sigma_1,\sigma_2,\sigma_3$ the Pauli matrices
\[
	\sigma_1 \;=\; \begin{pmatrix}
		0 & 1 \\
		1 & 0
	\end{pmatrix} \, , \quad
	\sigma_2 \;=\; \begin{pmatrix}
		0 & -\ii \\
		\ii & 0
	\end{pmatrix} \, ,\quad 
	\sigma_3 \;=\; \begin{pmatrix}
		1 & 0 \\
		0 & -1
	\end{pmatrix} \, .
\]
The free Dirac operator in $\R^2$ is realised as a self-adjoint operator with domain $H^1(\R^2;\C^2)$. Its spectrum is purely essential and $\sigma(D_0) = \sigma_\mathrm{ess}(D_0) = (-\infty,-m] \cup [m,+\infty)$. In fact, $-\ii \boldsymbol{\sigma} \cdot \nabla$ is equivalent to the multiplication operator $\boldsymbol{ \sigma} \cdot \boldsymbol k$ through a Fourier transform, see \cite[Chapter 1]{thaller} for details.

%\subsection{Boundary value problems for the Dirac operator}
%Recently, the study of the boundary value problem on connected domains for the Dirac operator has been widely investigated by the analysis, mathematical physics and physical communities. Even if the formal problem is quite old (as the Dirac equation was formulated by Dirac himself in 1928 in \cite{dirac}), more recent applications of the Dirac equation as effective equation in the description of different physical systems often require some non-trivial choice of boundary conditions: concrete examples are Dirac fermions confined by the termination of a lattice or by some type of potential, see \cite{somebody}\footnote{\biagio{Matteo, qui ci dicemmo che forse volevi aggiungere qualcosa}}, 

In the analysis of boundary value problems on connected domains, one of the most interesting examples of boundary conditions for the applications is the one known as \emph{infinite mass boundary condition}. As its name suggests, it is given by considering the limit case of infinite mass outside the domain, see \cite{SV18,ALTMR19}. In detail, let $\Omega \subset \R^2$ be a connected domain such that its boundary $\partial\Omega$ is regular enough: we denote by $\nn$ the outward normal and by $\tt$ the tangent vector to $\partial \Omega$ chosen in such a way that $(\nn, \tt)$ is positively oriented.

The {infinite mass} boundary condition is defined  as
\begin{equation}\label{eq:infinite-mass-(BC)}
	\mathcal{B}_\nn \psi = \psi \ \text{ on } \partial \Omega
\end{equation}
where the  matrix $\mathcal{B}_\nn$ is 
\[
	\mathcal{B}_\nn \;=\; - \ii \sigma_3 \, \boldsymbol{\sigma} \cdot \nn \, .
\]
The regularity of $\Omega$ plays a fundamental role in this sort of problem. When $\Omega$ is $C^2$-regular the Dirac operator $D_0$ acting on the set of functions in $H^1(\Omega;\C^2)$ and that verifies \eqref{eq:infinite-mass-(BC)} is self-adjoint, see  \cite{benguria2017self}.
%Neve
%operator $H_0$ acts as $D_0$ on the domain 
%\[
%	\D(H_0):=\left\lbrace 
%			\psi\in H^1(\Omega;\C^2):
%			\mathcal{B}_\nn \psi = \psi \ \text{ on } \partial \Omega
%		\right\rbrace,
%\]
%where the  matrix $\mathcal{B}_\nn$ is defined as follows:
%\begin{equation}\label{eq:def-B_nn}
%	\mathcal{B}_\nn \;=\; - \ii \sigma_3 \, \boldsymbol{\sigma} \cdot \nn \, .
%\end{equation}
% the operator $H_0$ is self-adjoint, see \cite{}. 
Such result is not anymore valid if we relax the regularity hypothesis of $\Omega$ and consider, for instance, domains with {corners}.
Let $\omega \in (0,2 \pi]$ and let $S_\omega$ be the two dimensional open sector of aperture $\omega$:
\begin{equation}\label{eq:defn.sector}
	S_\omega \;:=\; \big\{ (r\cos\theta,r\sin\theta) \in \mathbb{R}^2 \, : \, r > 0,\,0 < \theta < \omega \big\}.
\end{equation}
The problem of self-adjointness for the Dirac operator on $S_\omega$ with infinite mass boundary conditions is well understood: 
we resume some of the results from \cite{LTOB2018,PVDB2021} in the following theorem.
\begin{theorem}[{\cite{LTOB2018,PVDB2021}}]
Let $\omega \in (0,2\pi]$ and $S_\omega$ defined as in \eqref{eq:defn.sector}.
 Let $H_\omega$ be the operator 
\begin{equation}\label{eq:def-free-inf-mass}
\begin{split}
& H_\omega \psi := D_0 \psi, \\
& \D(H_\omega):=\left\lbrace 
			\psi\in H^1(S_\omega;\C^2):
			\mathcal{B}_\nn \psi = \psi \ \text{ on } \partial S_\omega
		\right\rbrace.
\end{split}
\end{equation}
Then:
\begin{enumerate}[label={(\roman*})]
	\item if $0<\omega\leq \pi$, $H_\omega$ is self-ajdoint;
	\item if $\pi<\omega \leq 2\pi$, $H_\omega$ admits infinite self-adjoint extensions and among them there exists a unique \emph{distinguished} one whose domain is included in the Sobolev space $H^{1/2}(S_\omega;\C^2)$.
\end{enumerate}
\end{theorem}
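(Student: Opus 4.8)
The plan is to combine a symmetry argument with an explicit computation of the deficiency indices of $D_0$ on $S_\omega$, organised through a partial-wave decomposition adapted to the corner. I would first check that $H_\omega$ is symmetric. Green's formula for $D_0$ produces a boundary term of the form $\int_{\partial S_\omega}\langle\phi,-\ii\,\boldsymbol{\sigma}\cdot\nn\,\psi\rangle$, and a direct pointwise computation shows that the infinite-mass condition \eqref{eq:infinite-mass-(BC)} makes this bilinear boundary term vanish whenever both $\phi$ and $\psi$ lie in $\D(H_\omega)$ (on the edge $\theta=0$ it forces $\psi_1=\psi_2$, on $\theta=\omega$ it forces $\psi_1=-e^{-\ii\omega}\psi_2$, and these annihilate the normal current). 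The real content is then the evaluation of $n_\pm=\dim\ker(H_\omega^\ast\mp\ii)$.

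Second, I would pass to polar coordinates, where $L^2(S_\omega;\C^2)\cong L^2((0,\infty),r\,\ud r)\otimes L^2((0,\omega);\C^2)$ and $D_0$ separates into a radial part and an angular operator on $(0,\omega)$ carrying the two edge conditions above. Diagonalising this angular operator—self-adjoint by the same boundary computation—yields a discrete orthonormal family of angular modes with real eigenvalues $\mu_k(\omega)$ and decomposes $H_\omega=\bigoplus_k h_{\mu_k}$ into one-dimensional radial Dirac operators on the half-line. On each summand the endpoint $r=+\infty$ is always limit point (mass term plus standard Dirac asymptotics), so all the work is at the vertex $r=0$: a Frobenius analysis there gives two solutions with radial exponents $r^{\pm|\mu_k|}$ in the $L^2(\ud r)$ normalisation, so $h_{\mu_k}$ is limit point—hence essentially self-adjoint—iff $|\mu_k|\ge \tfrac12$, and limit circle, contributing deficiency indices $(1,1)$, iff $|\mu_k|<\tfrac12$.

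Everything therefore reduces to locating the smallest $|\mu_k|$ as a function of $\omega$, and I expect this to be the main obstacle, since it is exactly where the corner geometry forces the transition. Solving the angular eigenvalue problem explicitly, the lowest mode should satisfy $|\mu_0(\omega)|\ge\tfrac12$ precisely when $\omega\le\pi$ and $|\mu_0(\omega)|<\tfrac12$ when $\omega>\pi$, while every higher mode stays above $\tfrac12$ on the whole range $\omega\in(0,2\pi]$. Hence for $0<\omega\le\pi$ every $h_{\mu_k}$ is essentially self-adjoint, $n_+=n_-=0$, and $H_\omega$ is self-adjoint, giving (i); for $\pi<\omega\le2\pi$ exactly one summand is in the limit-circle case, producing deficiency indices $(1,1)$ and therefore a circle's worth of self-adjoint extensions, which is the first assertion of (ii).

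Finally, to isolate the distinguished extension I would examine the two admissible vertex behaviours $r^{|\mu_0|}$ and $r^{-|\mu_0|}$ of the critical mode. Translating back to the original $L^2(r\,\ud r)$ variable and testing against the $H^{1/2}(S_\omega;\C^2)$ norm, the regular solution $r^{|\mu_0|-1/2}$ remains in $H^{1/2}$ while the singular one $r^{-|\mu_0|-1/2}$ does not (here $0<|\mu_0|<\tfrac12$ is what separates the two exponents across the threshold $-\tfrac12$). Imposing $\D\subset H^{1/2}$ thus forces the boundary condition that annihilates the singular exponent, selecting exactly one extension among the family and yielding the uniqueness claimed in (ii).
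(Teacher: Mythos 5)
Your route---symmetry via the vanishing boundary current, partial-wave decomposition through the spin--orbit operator, Weyl limit-point/limit-circle analysis of the radial half-line operators at $r=0$, and selection of the distinguished extension through the $H^{1/2}$-behaviour of the two vertex exponents---is exactly the mechanism this paper uses for its Coulomb generalisation (\Cref{prop:UnitaryTransformation}, \Cref{prop:closure}, \Cref{prop:description.extensions}; the statement under review is essentially the case $\nu=0$ of \Cref{thm:self-ajdoint-realisations}). Your numerology is also correct: the angular eigenvalues are $\pm\lambda_k$ with $\lambda_k=(2k+1)\pi/(2\omega)$, so the lowest modulus $\pi/(2\omega)$ crosses the Weyl threshold $\tfrac12$ exactly at $\omega=\pi$ while all higher modes stay $\ge 3/4$ on $(0,2\pi]$, and the dichotomy $r^{\lambda_0-1/2}\in H^{1/2}$, $r^{-\lambda_0-1/2}\notin H^{1/2}$ is the right selection mechanism for (ii).

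There is, however, a genuine gap in your part (i). Vanishing deficiency indices of a symmetric operator prove only that its \emph{closure} is self-adjoint, i.e.\ essential self-adjointness. The theorem asserts more: that $H_\omega$ with the specific domain $\{\psi\in H^1(S_\omega;\C^2):\mathcal{B}_\nn\psi=\psi\}$ is self-adjoint, which in addition requires $H_\omega$ to be closed---equivalently, that the graph closure of the minimal operator (or the maximal, adjoint domain) is contained in $H^1(S_\omega;\C^2)$. This $H^1$-regularity up to the corner is the real analytic content of (i); it is precisely where convexity ($\omega\le\pi$) enters beyond the mode count, and it cannot be read off mode by mode without extra work: the radial closures must be resummed with bounds uniform in $k$, and at the borderline $\omega=\pi$ (where $\lambda_0=\tfrac12$) the closure of the radial minimal operator is \emph{strictly larger} than $H^1_0((0,+\infty);\C^2)$ (see \Cref{prop:closure}), the surplus corresponding to two-dimensional functions that are nonetheless in $H^1(S_\pi;\C^2)$, e.g.\ functions smooth and nonvanishing at the vertex. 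Without this step your argument yields only essential self-adjointness of $H_\omega$ for $\omega\le\pi$. A smaller instance of the same omission occurs in (ii): uniqueness follows from your observation that every other extension contains a function with the non-$H^{1/2}$ exponent, but to conclude that the selected extension's \emph{entire} domain lies in $H^{1/2}$ you also need $\mathcal{D}(\overline{\Hmin})\subset H^{1/2}(S_\omega;\C^2)$, so that ``closure plus regular vertex function'' indeed lands in $H^{1/2}$; this again is a regularity/resummation statement, not a consequence of the single-mode computation.
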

\begin{remark}
It is well known that when $\Omega$ is a bounded connected Lipschitz domain,
the boundary trace operator $\operatorname{tr}: H^1(\Omega)\to H^{1/2}(\partial \Omega)$ is well defined and bounded.
However, for bounded domains, $H^1$ is not the maximal domain for the differential expression $\boldsymbol{\sigma}\cdot\nabla$. For this reason, it is convenient to introduce
\[
	\mathcal{K} (\Omega) := \{ u \in L^2(\Omega;\C^2):\boldsymbol{\sigma} \cdot \nabla  u \in L^2(\Omega;\C^2) \}. 
\]
For this space a weaker notion of boundary trace can be given.  Indeed, when $\Omega$ is a \emph{curvilinear polygon}, then the operator $\boldsymbol{\sigma} \cdot \nn \operatorname{tr} : H^1(\Omega;\C^2) \to L^2(\partial \Omega,\C^2)$ extends to a bounded operator $T : \mathcal{K} (\Omega) \to H^{-1/2}(\partial \Omega;\C^2)$, see \cite[Lemma 2.3]{PVDB2021}.
Then, for $0<\omega \leq \pi$ the boundary condition $\mathcal{B}_\nn \psi = \psi$ on $\partial S_\omega$ in \eqref{eq:def-free-inf-mass} is intended in the sense of $H^{1/2}(\partial S_\omega;\C^2)$ while  for $\pi<\omega \leq 2\pi$ it has to be intended in the weaker sense of $H^{-1/2}(\partial \Omega;\C^2)$.
\end{remark}
We refer to \cite{PVDB2021} for the description of the more general quantum dot boundary conditions. Moreover, we report to \cite{CL20} for the analysis of the self-adjointness in the case of discontinuous infinite mass boundary conditions. Finally, the analogous problem in the three dimensional setting, namely the self-adjointness of the Dirac operator on a three dimensional cone with MIT bag boundary conditions, can be found in \cite{CL2022}. 

A strictly related topic is the description of Dirac operators with $\delta$-shell interactions, see e.g.~\cite{AMV14, AMV15, AMV16, BEHL16, BHSS21, 
HOP18, OV18}, the review papers~\cite{BEHL19, OP21} and the references therein, both in the two and three dimensional setting.
In fact, it is possible to describe Dirac operators on domains as Dirac operators coupled with $\delta$--shell interactions generating confinement, see \cite[Section 2.3]{CLMT21}. In this field of research there has been a big effort to lower the regularity assumptions for the boundary of the considered domain: we report to \cite{BHSS21,B22} and references therein for the general case of domains with Lipshitz boundary. In particular, in \cite{BHSS21} it is shown that the Dirac operator on a compact region with a corner admits a unique self-adjoint realisation whose domain is included in $H^{1/2}(S_\omega;\C^2)$, but in the particular case of the sector \cite{LTOB2018,PVDB2021} provide a more precise description of the domain. 
Also, we refer to \cite{FL21} for the description of the Dirac operator with Lorentz-scalar $\delta$-shell interactions supported on star-graphs, these including the analysis of Dirac operators with infinite mass boundary conditions on a sector.

As mentioned before, the analysis of two dimensional Dirac equation has attained a certain amount of interest from low-energy condensed matter physics. The succesful experimental isolation of a single plane of graphene provide an interesting test for non-perturbative quantum electrodynamics \cite{NatureQED}. In fact, depending on the material graphene is deposited on, electronic excitations can be well described in terms of a massive or a massless Dirac equation \cite{PereiraKotov}. These substrates interact with graphene resulting in effective potentials that may break symmetries of the lattice or generate gaps in the electronic spectrum. It is of particular importance the analysis of charged impurities as they play an important role in the transport properties of graphene. In this context, parameters entering Dirac equation translate in a small mass and strong interaction characterised by a large value of the effective fine structure constant, and one consequently expects that charge impurities may lead to phenomena beyond the perturbative description of quantum electrodynamics such as the ``vacuum polarisation'' \cite{VacuumPol1}. These problems are treated in the literature by adding a (critical or, even, supercritical) Coulomb potential to the Dirac equation. Therefore we can think of an excitation of graphene to be modelled by a Dirac equation in two spatial dimension with a $\nu/|x|$ potential centered in the position of a impurity \cite{VacuumPol2}. 

Naturally, the history of Dirac-Coulomb operator begins in the three dimensional setting as the very first motivation for its introduction was the analysis of the relativistic correction to the spectral lines of the hydrogen atom. We summarise its very interesting and rich history, see \cite{Gallone2017},\cite{CP2018} or \cite[Section 1.3]{ELS1} for more details. 
Rigorous analysis of Dirac-Coulomb Hamiltonian devoted to establishing its self-adjointness dates back to the early '50s in the works of Rellich \cite{RellichOrig} and Kato \cite{KatoOrig}; only in the early '70s it was recognised by several authors that the operator with purely Coulomb potential was essentially self-adjoint if and only if $|\nu| \leq \frac{\sqrt{3}}{2}$. In the same years, three (in principle) \emph{distinguished} self-adjoint extensions were built by Schmincke \cite{Schminke1}, Nenciu \cite{Nenciu} and W\"ust \cite{Wust} in the regime of higher nuclear charge ($\frac{\sqrt{3}}{2} < |\nu| < 1$) and, just before the end of the decade, it was recognised that the three extensions were in fact the \emph{same} \cite{Wust2}. It took several years to develop powerful Hardy-Dirac inequalities to push the definition of \emph{the} distinguished extension up to the value $|\nu|=1$ in \cite{EstebanLoss,ELS1}. 

In the regime $\frac{\sqrt{3}}{2} < |\nu|$ the Dirac-Coulomb operator in three spatial dimension is \emph{not} essentially self-adjoint, so the research focused on the classification of \emph{all} the self-adjoint realisations of the formal operator in this regime.  This result was achieved correctly in \cite{GM-Dirac-Selfadjointness} for $\frac{\sqrt{3}}{2} < |\nu|<1$, in \cite{Hogreve} for $|\nu|>1$ and in \cite{CP2018,CP2019}  for any $\nu \in \R$ with different techniques:  the adaptation of Kre\u{\i}n-Vi\v{s}ik-Birman-Grubb extension scheme, von Neumann extension theory and the restriction of the domain of the adjoint and boundary triplets respectively. More recently, Derezi\'{n}ski and Ruba \cite{Derezinski} classified and carefully analyzed closed extensions with complex-valued potentials. Leaving the realm of electrostatic fields generated by one point charge, we mention \cite{ELS1}, where the authors prove the existence of a distinguished self-adjoint extension for a generic (in a certain sense ``subcritical'') charge distribution. 

Let us emphasize that the analyses of \cite{GM-Dirac-Selfadjointness,CP2018,CP2019,Hogreve} rely on the angular decomposition of the Dirac-Coulomb operator, therefore the results can be translated directly to the two-dimensional case modifying only the eigenvalues of the angular momentum appearing in the radial operator. In particular, two-dimensional Dirac-Coulomb operator ceases to be essentially self-adjoint when $\nu \neq 0$ and the distinguished extension exists for $|\nu| < \frac{1}{2}$, see \cite{Cuenin}.

%\biagio{A che serve questo paragrafo? Lo rimuoverei, oppure lo sposterei gi\`u quando parliamo di spettro.}
%It is of fundamental importance to determine the discrete spectra for Dirac operators with Coulomb and Coulomb-like potentials. In the regime of essential self-adjointness and for $|\nu|<1$, an explicit formula for the eigenvalues is available (the so-called \emph{Sommerfeld formula}). In the regime of non-essential self-adjointness ($\frac{\sqrt{3}}{2} < |\nu| < 1$) eigenvalues of any self-adjoint extension can be obtained as solutions of a given algebraic equation (see \cite{GM-Dirac-Eigenvalues}). It is remarkable the existence of a second self-adjoint extension whose discrete spectrum can be computed explicitly.

\subsection{Main results}%: Coulomb-type perturbations for the Dirac operator on a sector}
In this paper, we are interested in perturbing the Dirac operator $H_\omega$ on a sector with a potential of Coulomb-type.
To study self-adjointness, we use two different approaches: the Kato-Rellich theory and the explicit radial decomposition of the operator.
In the following, we assume $m=0$ without loss of generality, since a bounded perturbation does not influence such property.

The first and crucial tool in the analysis of perturbations of self-adjoint operators is the Kato-Rellich theorem.
Its use has a deep impact in physical applications being, for example, the key ingredient to prove self-adjointness of atomic Hamiltonians in non-relativistic quantum mechanics: in this setting, the inter-particle interaction is ``small'' in a certain sense with respect to the graph norm of an (essential) self-adjoint Hamiltonian. From an analytical point of view, the smallness of the inter-particle potential requires the validity of a Hardy inequality. 
Mimicking this approach for the Dirac operator on a sector, we present, as the first result of this paper, a {Hardy inequality} for the Dirac operator on sectors with infinite mass boundary conditions.
 \begin{theorem}[Dirac-Hardy inequality]\label{thm:hardy-on-sector}
 Let $\omega \in (0,2\pi]$, $S_\omega$ as in \eqref{eq:defn.sector} and let $H_\omega$ be defined as in \eqref{eq:def-free-inf-mass}.
For any $\psi\in \mathcal{D}(H_\omega)$ we have that:
		\begin{equation}\label{eq:hardy-on-sector}
			\int_{S_\omega} |\boldsymbol{\sigma}\cdot\nabla\psi|^2\,dx\geq \frac{(\pi-\omega)^2}{4\omega^2}\int_{S_\omega}\frac{|\psi|^2}{|x|^2}\,dx.
		\end{equation}
\end{theorem}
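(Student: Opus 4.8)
The plan is to pass to polar coordinates $(r,\theta)$ and exploit the fact that both sides of \eqref{eq:hardy-on-sector} are invariant under the scaling $\psi(x)\mapsto\psi(\lambda x)$, which reduces the estimate to a one‑parameter family of one‑dimensional inequalities on the angular interval $(0,\omega)$. First I would write $\boldsymbol{\sigma}\cdot\nabla$ in polar form: factoring out the pointwise‑unitary matrix $\boldsymbol{\sigma}\cdot\hat r$ one finds
\[
|\boldsymbol{\sigma}\cdot\nabla\psi|^2=\Big|\Big(\partial_r+\tfrac{\ii}{r}\partial_\theta\Big)\psi_1\Big|^2+\Big|\Big(\partial_r-\tfrac{\ii}{r}\partial_\theta\Big)\psi_2\Big|^2 ,
\]
with $\psi=(\psi_1,\psi_2)$, so that $\int_{S_\omega}|\boldsymbol{\sigma}\cdot\nabla\psi|^2\,\ud x=\int_0^\infty\!\int_0^\omega(\cdots)\,r\,\ud\theta\,\ud r$. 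In the same coordinates I would translate the infinite‑mass condition $\mathcal{B}_\nn\psi=\psi$: evaluating $\mathcal{B}_\nn=-\ii\sigma_3\,\boldsymbol{\sigma}\cdot\nn$ on the two edges yields the pointwise‑in‑$r$ constraints $\psi_1=\psi_2$ on $\{\theta=0\}$ and $\psi_1=-e^{-\ii\omega}\psi_2$ on $\{\theta=\omega\}$.

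\emph{Scaling reduction.} Since $\partial_r\pm\tfrac{\ii}{r}\partial_\theta=\tfrac1r(r\partial_r\pm\ii\partial_\theta)$ is homogeneous of degree $-1$ and $\tfrac1r\,\ud r=\ud t$ under $t=\log r$, the substitution $t=\log r$ makes both integrals translation invariant in $t$. A Fourier transform in $t$ (equivalently a Mellin transform in $r$) diagonalises the $t$‑dependence, and by Plancherel the claim reduces to the family, indexed by the radial frequency $k\in\R$,
\[
\int_0^\omega\!\Big(\big|(\partial_\theta+k)g_1\big|^2+\big|(\partial_\theta-k)g_2\big|^2\Big)\,\ud\theta\;\ge\;\frac{(\pi-\omega)^2}{4\omega^2}\int_0^\omega\big(|g_1|^2+|g_2|^2\big)\,\ud\theta ,
\]
where $g=(g_1,g_2)\in H^1((0,\omega);\C^2)$ carries the two endpoint constraints above. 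The parameter $k$ enters only through this family.

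\emph{Angular eigenvalue problem (the crux).} This is the heart of the argument. Reading the left‑hand side as $\|T_k g\|^2$ for $T_k=\mathrm{diag}(\partial_\theta+k,\partial_\theta-k)$, I would compute the bottom of the spectrum of $T_k^\ast T_k$ on the form domain fixed by the essential endpoint conditions. One gets $T_k^\ast T_k=(-\partial_\theta^2+k^2)\,\mathbbm{1}$, while the natural (variational) boundary conditions generated by the endpoint constraints turn out to be $g_1'(0)+g_2'(0)=0$ and $g_2'(\omega)=e^{\ii\omega}g_1'(\omega)$, in which $k$ cancels. Solving $-g_j''=\kappa^2 g_j$ with all four conditions collapses, after simplification, to the characteristic equation $\cos^2(\kappa\omega)\cos^2(\omega/2)=\sin^2(\kappa\omega)\sin^2(\omega/2)$, that is $\kappa\omega\in\{\pm(\pi-\omega)/2+n\pi:n\in\mathbb Z\}$. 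Hence the eigenvalues are $\mu_n(k)=\kappa_n^2+k^2$, the smallest value of $\kappa^2$ is $\big(\tfrac{\pi-\omega}{2\omega}\big)^2$, and the infimum over $k\in\R$ is attained at $k=0$ and equals exactly $\tfrac{(\pi-\omega)^2}{4\omega^2}$; this proves the family above (with sharp constant, and correctly giving $0$ for $\omega=\pi$).

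\emph{Conclusion and the delicate point.} Undoing Plancherel and the change of variables returns \eqref{eq:hardy-on-sector}. The main obstacle is not the algebra of the angular problem but the functional‑analytic bookkeeping around it: one must justify the Mellin/Fourier manipulation together with the vanishing of the radial boundary terms at $r=0$ and $r=\infty$, and—since the right‑hand side of \eqref{eq:hardy-on-sector} is precisely the quantity one is trying to control—the inequality should be proved first on a convenient dense core of $\mathcal{D}(H_\omega)$ (for instance functions supported away from the vertex) and only then extended to all $\psi\in\mathcal{D}(H_\omega)$ by a Fatou/monotone‑convergence argument rather than by naive graph‑norm density. Some additional care is required in the regime $\pi<\omega\le 2\pi$, where the boundary conditions are only meant in the weak $H^{-1/2}$ sense.
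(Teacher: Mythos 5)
Your proposal is correct, but it is not the paper's argument: the two proofs run the separation of variables in opposite orders. The paper first diagonalises the \emph{angular} variable, expanding $\psi=r^{-1/2}\sum_{k\in\N}\big(u_k^+f_k^++u_k^-f_k^-\big)$ in the eigenbasis of the spin-orbit operator $K_\omega$ (\Cref{prop:properties.spin-orbit}, \Cref{prop:UnitaryTransformation}), so that the left-hand side of \eqref{eq:hardy-on-sector} becomes $\sum_{k\in\N}\int_0^\infty\big(|(\partial_r-\tfrac{\lambda_k}{r})u_k^+|^2+|(\partial_r+\tfrac{\lambda_k}{r})u_k^-|^2\big)\,dr$, and then it quotes the one-dimensional weighted Hardy inequalities of \cite[Proposition 2.4]{CP2018}, which give the mode-wise constants $(\lambda_k\mp\tfrac12)^2$; minimising over $k\in\N$ and the sign yields $\tfrac{(\pi-\omega)^2}{4\omega^2}$. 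You instead diagonalise the \emph{radial} variable first (Mellin/Plancherel) and solve the resulting angular boundary-value problem from scratch. The two computations are consistent: your admissible frequencies $\kappa\omega\in\pm\tfrac{\pi-\omega}{2}+\pi\mathbb{Z}$ are, up to sign, exactly the paper's constants $\lambda_k\mp\tfrac12$, since $\lambda_k=\tfrac{(2k+1)\pi}{2\omega}$. The two points on which your scheme hinges both check out: (a) the infinite-mass conditions force $|g_1|=|g_2|$ at both endpoints, so the $k$-dependent boundary term $k\big[|g_1|^2-|g_2|^2\big]_0^\omega$ vanishes, the natural boundary conditions $g_1'(0)+g_2'(0)=0$ and $g_2'(\omega)=e^{\ii\omega}g_1'(\omega)$ are indeed $k$-independent, and hence the bottom eigenvalue is $k^2+\kappa_{\min}^2$; (b) the determinant of the resulting $2\times2$ system does reduce to your characteristic equation, whose smallest root gives $\kappa_{\min}^2=\tfrac{(\pi-\omega)^2}{4\omega^2}$ for every $\omega\in(0,2\pi]$. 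As for what each route buys: the paper's order of operations is economical, since the partial-wave machinery is needed anyway for \Cref{thm:self-ajdoint-realisations} and \Cref{thm:distinghuished-self-ajdoint} and the radial Hardy lemma is citable, making the proof a few lines; yours is self-contained (no explicit spin-orbit eigenfunctions, no external one-dimensional Hardy inequality) and exhibits the constant as the bottom of an explicit angular spectrum, hence sharp within this scheme. Your closing caveats also mirror the paper, which likewise proves the inequality on smooth functions supported away from the vertex and then invokes density; your insistence on handling the right-hand side by a Fatou argument (since $\int_{S_\omega}|\psi|^2|x|^{-2}\,dx$ is not a priori finite along a graph-norm approximating sequence) is precisely the detail that the paper's one-line density step leaves implicit.
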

\begin{remark}
We underline that $\Vert -i\boldsymbol\sigma\cdot\nabla \psi \Vert_{L^2(\R^2;\C^2)} = \Vert \nabla \psi \Vert_{L^2(\R^2;\C^2)}$. Consequently, a non-trivial Hardy inequality as in \eqref{eq:hardy-on-sector} does not hold if we replace $S_\omega$ with the whole $\R^2$, since it does not hold for the gradient. \Cref{thm:hardy-on-sector} shows that the Hardy-type estimate \eqref{eq:hardy-on-sector} holds when we restrict to the domain $S_\omega$: such phenomena is known to happen also for the Hardy inequality for the gradient.
\end{remark}
Thanks to the Kato-Rellich and W\"{u}st Theorems \cite[Theorems X.12 and X.14]{reedsimon2}, \Cref{thm:hardy-on-sector} immediately implies the following stability result for the self-adjointness of $H_\omega$ under unbounded perturbations of Coulomb-type.
\begin{corollary}\label{cor:kato-rellich}
Let $\omega \in (0,\pi)$, $S_\omega$ as in \eqref{eq:defn.sector} and $H_\omega$ defined as in \eqref{eq:def-free-inf-mass}. Let ${V}:S_\omega \to \C^{2\times 2}$ such that ${V}(x)$ is Hermitian for a.a.~$x \in S_\omega$ and such that for some $\nu>0$
\[
\left\vert {V}(x)\right\vert \leq \frac{\nu}{|x|}, \quad \text{ for a.a. }x\in S_\omega,
\]
being $| V(x) |$ is the operator norm of the matrix $V(x) \in \C^{2\times 2}$.
Then:
\begin{enumerate}[label={(\roman*})]
\item if $\nu < \frac{\pi-\omega}{2\omega}$, $H_\omega + {V}$ is self-adjoint with 
$\mathcal{D}(H_\omega + {V}) = \mathcal{D} (H_\omega) $;
%=
%\left\lbrace 
%			\psi\in H^1(S_\omega;\C^2):
%			\mathcal{B}_\nn \psi = \psi \ \text{ on } \partial S_\omega
%		\right\rbrace;
\item if $\nu = \frac{\pi-\omega}{2\omega}$, $H_\omega + {V}$ is essentially self-adjoint on $\mathcal{D}(H_\omega + {V}) = \mathcal{D} (H_\omega) $.
\end{enumerate}
\end{corollary}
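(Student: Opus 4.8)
The plan is to realise $V$ as an $H_\omega$-bounded symmetric perturbation and then invoke the abstract stability theorems. Since $0<\omega<\pi$, the operator $H_\omega$ is self-adjoint by the self-adjointness theorem recalled above (part (i), valid for $0<\omega\le\pi$), and since $m=0$ one has $H_\omega\psi=-\ii\boldsymbol\sigma\cdot\nabla\psi$, so that $\|H_\omega\psi\|_{L^2}^2=\int_{S_\omega}|\boldsymbol\sigma\cdot\nabla\psi|^2\,dx$ for every $\psi\in\D(H_\omega)$. Multiplication by $V$ is symmetric because $V(x)$ is Hermitian for a.a.~$x$, so the only substantive point is to establish a relative bound $\|V\psi\|\le a\|H_\omega\psi\|$ with the correct constant $a$; the domain inclusion $\D(H_\omega)\subseteq\D(V)$ will follow from the same estimate.

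First I would estimate pointwise, using that $|V(x)\psi(x)|\le|V(x)|\,|\psi(x)|\le \nu|\psi(x)|/|x|$ for a.a.~$x\in S_\omega$, whence
\[
\|V\psi\|_{L^2(S_\omega)}^2\;\le\;\nu^2\int_{S_\omega}\frac{|\psi|^2}{|x|^2}\,dx.
\]
Then I would apply the Dirac-Hardy inequality of \Cref{thm:hardy-on-sector}, which holds for \emph{every} $\psi\in\D(H_\omega)$, to bound the right-hand side by $\nu^2\,\tfrac{4\omega^2}{(\pi-\omega)^2}\int_{S_\omega}|\boldsymbol\sigma\cdot\nabla\psi|^2\,dx$. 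Combining the two displays and taking square roots gives
\[
\|V\psi\|_{L^2(S_\omega)}\;\le\;\frac{2\omega\nu}{\pi-\omega}\,\|H_\omega\psi\|_{L^2(S_\omega)}\qquad\text{for all }\psi\in\D(H_\omega),
\]
so $V$ is $H_\omega$-bounded with relative bound $a=\tfrac{2\omega\nu}{\pi-\omega}$ and relative constant $b=0$; in particular $\D(H_\omega)\subseteq\D(V)$ is automatic and $V$ is well defined on $\D(H_\omega)$.

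Finally I would read off the two regimes from the value of $a$. If $\nu<\tfrac{\pi-\omega}{2\omega}$ then $a<1$, and the Kato-Rellich theorem \cite[Theorem X.12]{reedsimon2} yields that $H_\omega+V$ is self-adjoint on $\D(H_\omega)$, which is (i). If $\nu=\tfrac{\pi-\omega}{2\omega}$ then $a=1$, the limiting case in which Kato-Rellich no longer applies but W\"ust's theorem \cite[Theorem X.14]{reedsimon2} does, giving essential self-adjointness on $\D(H_\omega)$, which is (ii).

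I expect no genuine obstacle here beyond the bookkeeping of constants: the entire difficulty has been absorbed into \Cref{thm:hardy-on-sector}, whose constant $\tfrac{(\pi-\omega)^2}{4\omega^2}$ is exactly what fixes the threshold $\nu=\tfrac{\pi-\omega}{2\omega}$ separating the two cases. The single point deserving care is that the Hardy inequality be available on the full operator domain $\D(H_\omega)$ and not merely on a core — which is precisely how \Cref{thm:hardy-on-sector} is phrased — so that no additional density or closure argument is required.
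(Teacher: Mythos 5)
Your proposal is correct and follows exactly the paper's intended argument: the paper derives \Cref{cor:kato-rellich} precisely by combining the Dirac--Hardy inequality of \Cref{thm:hardy-on-sector} (stated on all of $\mathcal{D}(H_\omega)$, as you note) with the Kato--Rellich theorem in the subcritical case $a=\tfrac{2\omega\nu}{\pi-\omega}<1$ and W\"ust's theorem in the borderline case $a=1$. Your write-up simply makes explicit the relative-boundedness computation that the paper leaves implicit, with the correct constants and domain bookkeeping.
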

\begin{remark}
	Hypotheses of \Cref{cor:kato-rellich} are satisfied for potentials that \emph{locally} diverge logarithmically. This is important because the divergence of two-dimensional electrostatic field in dimension 2 close to the charge is logarithmic. However, as discussed above, the interest for potentials of the type $1/|x|^\alpha$ arise when restricting a three dimensional model to a two dimensional effective one.
\end{remark}

In the particular case of the \emph{Coulomb potential}
\[
V(x):=\dfrac{\nu}{|x|} \mathbbm{1}_2, \quad \text{ for all } x \in S_\omega\setminus\{0\}
\]
we can provide a much more detailed description of the self-adjoint realisations of $D_0 + V$ exploiting the radial symmetry: in the following we extend (and improve) the results in \Cref{cor:kato-rellich} to any angle $\omega \in (0,2\pi]$ and $\nu \in \R$.
We define the \emph{minimal} operator $H_{\text{min}}$ as follows
\begin{equation}\label{eq:def-H_min}
	\begin{split}
		\mathcal{D}(H_{\text{min}}) &\;:=\; \left\{ u \in C^\infty_c\left(\,\overline{S_\omega}\setminus\{0\}\,;\,\mathbb{C}^2\,\right) \, : \, \mathcal{B}_\nn u = u  \text{ on } \partial S_\omega \right\},\\
	\Hmin u &\;:=\;(D_0+V) u\,,
	\end{split}
\end{equation}
%where $(D+V) u$ is read in the {classical} sense. 
The operator $\Hmin$ is {symmetric}, as can be seen with an explicit integration by parts.
%Define the \emph{maximal} operator $\Hmax$ as 
%\begin{equation}\label{eq:def-H_max}
%	\Hmax:=(\Hmin)^*.
%\end{equation}%
%In this paper we describe self-adjoint extensions $T$ of the minimal operator $\Hmin$.
%$T$ is consequently a restriction of the maximal operator, i.e.~
%\begin{equation}
%  \Hmin \subseteq T=T^*  \subseteq  \Hmax.
%\end{equation}
%\footnote{\biagio{this is not the maximal operator associated to our formal hamiltonian. Moreover, we never use $H_\text{max}$ in our statements below. Maybe it is useless to define it here}}
%\begin{equation}\label{eq:def-H_max}
%	\begin{split}
%		\mathcal{D}(H_{\text{max}}) &\;:=\; \{ u\in \mathcal{K}(\Omega) \, : \, \mathcal{B}_\nn u = u \ \text{on} \ \partial S_\omega \} \,,\\
%	H_{\text{max}} u &\;:=\;H_V u\,,
%	\end{split}
%\end{equation}
%
%Indeed, define the \emph{minimal} operator $H_{\text{min}}$ as follows
%\begin{equation}\label{eq:def-H_min}
%	\begin{split}
%		\mathcal{D}(H_{\text{min}}) &\;:=\; \left\{ \psi \in H^1\left(S_\omega\,;\,\mathbb{C}^2\,\right) \, : \, \mathcal{B}_\nn \psi = u  \text{ on } \partial S_\omega \right\} \,,\\
%	\Hmin \psi &\;:=\;D_V \psi\,,
%	\end{split}
%\end{equation}
%Then, thanks to \eqref{thm:hardy-on-sector} the operator $\Hmin$ is well defined on the Hilbert space of the physical system of interest
%$\mathcal{H} \;=\; L^2(S_\omega; \mathbb{C}^2)$.
%Finally define 
%\begin{equation}\label{eq:def-H_max}
%\Hmax:=\Hmin^* 
%\end{equation}
Our next result is the classification of the self-adjoint extensions of the minimal operator $\Hmin$.
%$T$ is consequently a restriction of the maximal operator, i.e.~
%\begin{equation}\label{eq:T-extension}
%  \Hmin \subseteq T=T^*  \, .
%\end{equation}
\begin{theorem}%[Self-adjoint realisations of the Dirac-Coulomb operator on a sector]
\label{thm:self-ajdoint-realisations}
Let $\omega \in (0,2\pi]$, $S_\omega$ as in \eqref{eq:defn.sector} and  $\Hmin$ as in \eqref{eq:def-H_min}.
	Then:
	\begin{enumerate}[label=(\roman*)]  
			\item\label{item:self-adjointness} if %$\pi^2/\omega^2-4\nu^2\geq 1$
		$\nu^2 \leq \frac{\pi^2 - \omega^2}{4\omega^2}$, the operator $\Hmin$ is essentially self-adjoint and 
		\[\mathcal{D}(\overline{\Hmin})  = \mathcal{D}(H_\omega) = \left\lbrace 
			\psi\in H^1(S_\omega;\C^2):
			\mathcal{B}_\nn \psi = \psi \ \text{ on } \partial S_\omega
		\right\rbrace;
		\]
		\item\label{item:many.extensions} if %$\pi^2/\omega^2-4\nu^2<1$
		$\nu^2 > \frac{\pi^2 - \omega^2}{4\omega^2}$, the operator $\Hmin$ has infinitely many self-adjoint extensions and there exists a one-to-one correspondence between the self-ajdoint extensions of $\Hmin$ and the space  $\mathcal{U}(d+1)$ of the unitary matrices on $\C^{d+1}$, being 
			\begin{equation}\label{eq:def-d}
				d:=\max\left\{k \in \N: k<\frac{\omega}{\pi} \sqrt{\nu^2+\frac{1}{4}}-\frac{1}{2} \right\} \, .
			\end{equation}
%\biagio{a me viene
%			\begin{equation}\label{eq:def-d}
%				d:=\max\left\{k\in\N: k< \frac{\omega \sqrt{(4\nu^2+1)} - 2\pi}{4\pi}\right\}.
%			\end{equation}
%}
%\matteo{a me viene
%			\begin{equation}\label{eq:def-d}	
%				d:=\max\left\{k \in \N: k<\frac{\omega}{\pi} \sqrt{\nu^2+\frac{1}{4}}-\frac{1}{2} \right\} \, .
%			\end{equation}
%}
%%
%%		\item if $\frac{\pi^2}{4\omega^2}-\nu^2=\frac{1}{4}$, 
%the operator $\Hmin$ is essentially self-adjoint;
%		
%		\item if $d>0$, 
	\end{enumerate}
\end{theorem}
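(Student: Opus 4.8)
The plan is to exploit the radial symmetry of the Coulomb potential, separating variables in polar coordinates so as to reduce $\Hmin$ to an orthogonal sum of one-dimensional radial Dirac--Coulomb operators whose deficiency indices can be computed by the classical Weyl limit-point/limit-circle alternative. First I would pass to polar coordinates $(r,\theta)\in(0,+\infty)\times(0,\omega)$ and conjugate by the unitary $\psi\mapsto\sqrt{r}\,\psi$, which turns $L^2(S_\omega;\C^2)$ into $L^2((0,+\infty),\ud r)\otimes L^2((0,\omega);\C^2)$ so that the radial measure becomes the flat one $\ud r$. In these variables $-\ii\boldsymbol{\sigma}\cdot\nabla$ splits into a radial derivative twisted by a first-order angular operator $\mathcal{K}$ acting on $\theta$. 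The structural step is to show that, once the boundary condition $\mathcal{B}_\nn\psi=\psi$ is imposed at $\theta=0$ and $\theta=\omega$, the operator $\mathcal{K}$ becomes self-adjoint on $L^2((0,\omega);\C^2)$ with purely discrete spectrum; together with the conjugation above this yields effective radial couplings $\kappa_k=\frac{\pi}{\omega}\big(k+\tfrac12\big)$, $k\in\N$. Expanding along the eigenbasis of $\mathcal{K}$ then gives a unitary equivalence $\overline{\Hmin}\cong\bigoplus_{k\in\N}\overline{h_{\kappa_k}}$, where each block acts on $L^2((0,+\infty),\ud r;\C^2)$ as the radial Dirac--Coulomb operator
\begin{equation*}
h_{\kappa_k}=\begin{pmatrix}\dfrac{\nu}{r} & -\dfrac{\ud}{\ud r}+\dfrac{\kappa_k}{r}\\ \dfrac{\ud}{\ud r}+\dfrac{\kappa_k}{r} & \dfrac{\nu}{r}\end{pmatrix},
\end{equation*}
defined on the core $C^\infty_c((0,+\infty);\C^2)$.

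Next I would treat each radial operator $h_{\kappa_k}$ separately. At $r=+\infty$ the Coulomb tail is negligible and the massless radial Dirac operator is in the limit-point case, so no boundary behaviour there contributes to the deficiency indices. At $r=0$ the leading part is of Euler type: inserting the ansatz $\phi(r)=r^{s}v$, with $v\in\C^2$ constant, into $h_{\kappa_k}\phi=0$ gives the indicial relation $s^2=\kappa_k^2-\nu^2$, hence the two exponents $\pm\sqrt{\kappa_k^2-\nu^2}$. Since $r^{s}\in L^2$ near the origin with respect to $\ud r$ precisely when $s>-\tfrac12$, both solutions are square integrable --- so that $h_{\kappa_k}$ is in the limit-circle case with deficiency indices $(1,1)$ --- if and only if $\kappa_k^2-\nu^2<\tfrac14$; otherwise the more singular exponent satisfies $-\sqrt{\kappa_k^2-\nu^2}\le-\tfrac12$, one solution leaves $L^2$, and $h_{\kappa_k}$ is limit point and therefore essentially self-adjoint.

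It then remains to count the limit-circle channels. With $\kappa_k=\frac{\pi}{\omega}(k+\tfrac12)$ the condition $\kappa_k^2<\nu^2+\tfrac14$ is equivalent to $k<\frac{\omega}{\pi}\sqrt{\nu^2+\tfrac14}-\tfrac12$. In case~(i) the hypothesis $\nu^2\le\frac{\pi^2-\omega^2}{4\omega^2}$ is exactly $\frac{\omega}{\pi}\sqrt{\nu^2+\tfrac14}\le\tfrac12$, so no $k\in\N$ meets the inequality: every $h_{\kappa_k}$ is essentially self-adjoint and hence so is $\Hmin$. To upgrade this to $\mathcal{D}(\overline{\Hmin})=\mathcal{D}(H_\omega)$ I would identify, channel by channel, the domain of $\overline{h_{\kappa_k}}$ with the radial space corresponding to $H^1$, reassemble it with the smooth angular factor, and check that the reconstructed trace realises $\mathcal{B}_\nn\psi=\psi$. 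In case~(ii) exactly the indices $k=0,1,\dots,d$, with $d$ as in \eqref{eq:def-d}, satisfy the inequality, so $\Hmin$ has deficiency indices $(d+1,d+1)$; von Neumann's extension theory then puts its self-adjoint extensions in one-to-one correspondence with the unitaries between the two $(d+1)$-dimensional deficiency spaces, i.e.\ with $\mathcal{U}(d+1)$.

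The hard part will be the angular reduction itself: exhibiting the self-adjoint realisation of $\mathcal{K}$ adapted to the infinite mass boundary condition, computing its spectrum, and proving that the corresponding eigenfunctions form a complete orthonormal system, so that $\overline{\Hmin}\cong\bigoplus_k\overline{h_{\kappa_k}}$ is genuinely unitary. In particular, it is the chirality of $\mathcal{B}_\nn$ that should select the half-integer shift $k+\tfrac12$ and the index set $k\in\N$ rather than $k\in\mathbb{Z}$, and getting this bookkeeping right is exactly what makes the threshold $\nu^2+\tfrac14$ and the count $d+1$ come out sharp. A secondary, more technical point is the domain identification in case~(i), which needs the radial regularity analysis above and not merely the abstract essential self-adjointness.
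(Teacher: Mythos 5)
Your proposal follows essentially the same route as the paper: decomposition into partial wave subspaces via the self-adjoint spin-orbit operator with infinite-mass boundary conditions (your $\mathcal{K}$, with couplings $\kappa_k=\lambda_k=\tfrac{(2k+1)\pi}{2\omega}$), channel-by-channel deficiency indices for the half-line radial Dirac--Coulomb operators with threshold $\kappa_k^2-\nu^2<\tfrac14$, and von Neumann's theorem to get the $\mathcal{U}(d+1)$ parametrisation. The only cosmetic difference is that you re-derive the radial deficiency indices through the Weyl limit-point/limit-circle alternative, whereas the paper imports exactly these facts (and the closure domains needed for the case $\nu^2\le\frac{\pi^2-\omega^2}{4\omega^2}$, including the borderline channel $\lambda_0^2-\nu^2=\tfrac14$) from the references \cite{CP2018} and \cite{FL21}.
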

\begin{remark}
For $\omega \in (0,\pi)$, $\frac{(\pi - \omega)^2}{4\omega^2} < \frac{\pi^2 - \omega^2}{4\omega^2}$, so \Cref{thm:self-ajdoint-realisations} {\ref{item:self-adjointness}} gives a better result than \Cref{cor:kato-rellich}.
This is not surprising: already in the whole space a similar phenomenon occours. In fact, the Dirac-Hardy inequality \eqref{eq:hardy-on-sector} does not allow to exploit the peculiar matricial form of the Coulomb potential, and provides a more general (and weaker) result. For a discussion on this feature in the three dimensional setting, we refer to the introduction of \cite{CPV20}. 
\end{remark}

When $\frac{\pi^2 - \omega^2}{4\omega^2}<\nu^2\leq \frac{\pi^2}{4\omega^2}$, it is possible to select a \emph{distinguished} self-adjoint extension among all the self-adjoint extensions given in \Cref{thm:self-ajdoint-realisations} {\ref{item:many.extensions}}, requiring that the functions in its domain have the best possible behaviour in the origin. To this  purpose, for $w\in L^1_{\text{loc}}(\R^2)$ set 
	\[
		\mathcal{D}(w,B_1):=\{u\in L^2(\R^2): wu\in L^2(B_1)\},
	\]	
	where $B_1$ denotes the ball of radius $1$ centred at the origin.

\begin{theorem}\label{thm:distinghuished-self-ajdoint}
	Under the assumptions of \Cref{thm:self-ajdoint-realisations} assume moreover that $\frac{\pi^2 - \omega^2}{4\omega^2}<\nu^2$. Then:
\begin{enumerate}[label=(\roman*)]
\item if $\frac{\pi^2 - \omega^2}{4\omega^2}<\nu^2< \frac{\pi^2}{4\omega^2}$ there exists a unique self-adjoint extension $T^{(D)}$ of $\Hmin$ such that 

\[
\mathcal{D}(T^{(D)}) \subset \mathcal{D}(|x|^{-a},B_1),\quad \text{for all }  0\leq a <\frac{1}{2}+\sqrt{\frac{\pi^2}{4\omega^2}-\nu^2}.
\]
			Thus $T^{(D)}$ is the distinguished extension.

\item if $\frac{\pi^2}{4\omega^2} = \nu^2$ there exists a unique self-adjoint extension $T^{(D)}$ of $\Hmin$ such that 
\begin{equation}\label{eq:condition.distinguished.critical}
\mathcal{D}(T^{(D)}) \subset \mathcal{D}((|x|^{a}\log^2|x|)^{-1} ,B_1),\quad \text{for all }  0\leq a \leq \frac{1}{2}.
\end{equation}
			Thus $T^{(D)}$ is the distinguished extension.

\item
if $\frac{\pi^2}{4\omega^2} < \nu^2$, there exist infinite extensions $T$ of $\Hmin$ such that 
\begin{equation}%\label{eq:condition.distinguished.critical}
\mathcal{D}(T^{(D)}) \subset \mathcal{D}((|x|^{a}\log^2|x|)^{-1} ,B_1),\quad \text{for all }  0\leq a \leq \frac{1}{2}.
\end{equation}
Thus $\Hmin$ does not have any distinguished extension.
\end{enumerate}
\end{theorem}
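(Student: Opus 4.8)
The plan is to reduce the whole problem to a one-dimensional radial analysis and then invoke the theory of distinguished extensions for a family of ODE systems. Since we are dealing with the explicit Coulomb potential $V=\nu|x|^{-1}\mathbbm{1}_2$, the operator commutes with the angular structure induced by the infinite-mass boundary condition, so I would first perform a radial decomposition into partial wave subspaces (this is the decomposition the introduction promises and presumably carries out before this theorem). The upshot is that $\Hmin$ is unitarily equivalent to an orthogonal sum $\bigoplus_k h_k$ of half-line Dirac-type operators, where each $h_k$ acts on $L^2((0,\infty);\C^2)$ with a radial expression of the form $-\ii\sigma_1 \partial_r + \frac{\kappa_k}{r}\sigma_\star + \frac{\nu}{r}$ for suitable effective angular momenta $\kappa_k$ depending on $\omega$ and the boundary condition. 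The singular behaviour at $r=0$ of solutions is governed by the quantity $\sqrt{\kappa_k^2-\nu^2}$, and the condition $\nu^2>\frac{\pi^2-\omega^2}{4\omega^2}$ is exactly the threshold past which the lowest channel becomes singular (deficiency indices jump). The key index is $d$ from \eqref{eq:def-d}, counting how many channels are in the limit-circle (non-essentially-self-adjoint) regime.

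Next I would analyse, channel by channel, the two linearly independent solutions of $h_k u = 0$ (or $h_k u = \pm \ii u$ for the deficiency computation) near the origin. Each singular channel admits solutions behaving like $r^{-1/2\pm s_k}$ with $s_k=\sqrt{\kappa_k^2-\nu^2}$, and I would track which of these lie in $L^2$ near $0$ and, more finely, in which weighted space $\mathcal{D}(|x|^{-a},B_1)$. The distinguished extension is characterised by selecting, in every singular channel, the \emph{less singular} solution, i.e.~the one with exponent $-1/2+s_k$, since that is the branch that makes $|x|^{-a}u\in L^2(B_1)$ for the largest possible range of $a$. The threshold exponent is precisely $\tfrac12+\sqrt{\tfrac{\pi^2}{4\omega^2}-\nu^2}$, matching the statement: here the square root is real exactly when $\nu^2<\frac{\pi^2}{4\omega^2}$, which explains the trichotomy.

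I would then prove the three cases as follows. For (i), when $\frac{\pi^2-\omega^2}{4\omega^2}<\nu^2<\frac{\pi^2}{4\omega^2}$, only the single lowest channel ($k$ minimal, effective $\kappa$ of smallest modulus) is singular with $s_k>0$ real, and it admits a unique boundary condition selecting the regular branch; uniqueness of the extension satisfying the weighted-domain inclusion follows because any other self-adjoint boundary condition in that channel mixes in the singular branch $r^{-1/2-s_k}$, which fails $|x|^{-a}u\in L^2$ for $a$ up to the stated threshold. For (ii), at the critical value $\nu^2=\frac{\pi^2}{4\omega^2}$, the two exponents coalesce and the solutions acquire a logarithmic factor, $r^{-1/2}$ and $r^{-1/2}\log r$; the distinguished choice is the one without the extra log growth, and the weighted space must be taken with the $\log^2$ correction as in \eqref{eq:condition.distinguished.critical} precisely to separate the two. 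For (iii), when $\nu^2>\frac{\pi^2}{4\omega^2}$, the exponents become complex ($s_k$ purely imaginary), both solutions oscillate as $r^{-1/2}e^{\pm \ii|s_k|\log r}$ and have \emph{identical} $|x|^{-a}$ weighted integrability; no canonical selection exists, so every self-adjoint boundary condition yields a domain satisfying the same weighted inclusion and none is distinguished.

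The main obstacle will be the borderline and critical weighted-integrability bookkeeping in cases (ii) and (iii): deciding exactly which power $a$ and which logarithmic correction separates the competing solution branches, and proving that in the complex-exponent regime the two branches are genuinely indistinguishable by any weighted $L^2(B_1)$ criterion of the stated form, so that uniqueness truly fails. I would handle this by computing the near-origin asymptotics of the two fundamental solutions explicitly in each channel and comparing $\int_0^1 r^{-2a}|u(r)|^2\,r\,\ud r$ for each branch, paying careful attention to the logarithmic endpoint; the comparison at the critical exponent is where the $\log^2$ weight in \eqref{eq:condition.distinguished.critical} is forced, and it is the delicate point that must be verified rather than merely asserted.
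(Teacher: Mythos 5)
Your strategy is exactly the paper's: decompose $\Hmin$ into partial waves (\Cref{prop:radial.decomposition}), classify the self-adjoint extensions of each half-line operator $h_{\nu,k}$ through the near-origin asymptotics of its domain functions (\Cref{prop:description.extensions} — which the paper imports ready-made from \cite{CP2018} rather than re-deriving, as you propose), and then select channel-by-channel by a weighted-integrability test (\Cref{thm:distinghuished-self-ajdoint-rad}): the regular branch $r^{+s_k}$ in the subcritical channel, the log-free solution at criticality, and no possible selection in the supercritical channel. Your observations that $d=0$ in case (i), that the threshold exponent is $\tfrac12+\sqrt{\lambda_0^2-\nu^2}$, and that oscillating branches $r^{-1/2\pm\ii|s_k|}$ are indistinguishable in case (iii) all agree with the paper. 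One minor overreach: in case (iii) you claim \emph{every} self-adjoint boundary condition yields a domain satisfying the log-weighted inclusion; this is false once $d\geq 1$ (large $\nu$), because an extension whose domain contains the singular branch $r^{-s_k}$ of a higher \emph{subcritical} channel violates the inclusion at $a=\tfrac12$. The theorem, and the paper, only need \emph{infinitely many} such extensions, obtained by varying $\alpha$ in channel $0$ alone, so this is easily repaired.

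The genuine problem sits in case (ii), precisely at the point you flag as ``the delicate point that must be verified rather than merely asserted'': when one actually does the verification, the $\log^2$ weight does \emph{not} separate the two branches. In the critical channel the competing behaviours are $\psi_{\mathrm{good}}\sim|x|^{-1/2}$ and $\psi_{\mathrm{bad}}\sim|x|^{-1/2}\log|x|$, and against $w_a=(|x|^{a}\log^2|x|)^{-1}$ at the endpoint $a=\tfrac12$ one finds
\[
\int_{B_{1/2}}\bigl|w_{1/2}\,\psi_{\mathrm{good}}\bigr|^2\,dx\;\simeq\;\int_0^{1/2}\frac{dr}{r\,|\log r|^{4}}<\infty,
\qquad
\int_{B_{1/2}}\bigl|w_{1/2}\,\psi_{\mathrm{bad}}\bigr|^2\,dx\;\simeq\;\int_0^{1/2}\frac{dr}{r\,|\log r|^{2}}<\infty,
\]
as the substitution $t=-\log r$ turns these into $\int_{\log 2}^\infty t^{-4}\,dt$ and $\int_{\log 2}^\infty t^{-2}\,dt$; for $a<\tfrac12$ both converge a fortiori. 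So with the weight as written \emph{every} extension in the critical channel passes the test, and the uniqueness argument you sketch (and the theorem's claim of a unique $T^{(D)}$) collapses. Separation requires a lower power of the logarithm: with $(|x|^{1/2}|\log|x||^{b})^{-1}$ the good branch passes iff $b>\tfrac12$ and the bad branch fails iff $b\leq\tfrac32$, so e.g.\ $b=1$ works, while the stated $b=2$ does not. Be aware that the paper is no better off here: its proof of \Cref{thm:distinghuished-self-ajdoint-rad} consists of the words ``with explicit computation'', and that computation, once performed, contradicts the claimed dichotomy for the stated weight. You have therefore reproduced the paper's argument faithfully, but the step you deferred is exactly the one that cannot be completed as stated; either the weight in the statement must be corrected or case (ii) fails.
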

\begin{remark}\label{rem:sobolev-regularity}
If $\frac{\pi^2 - \omega^2}{4\omega^2}<\nu^2< \frac{\pi^2}{4\omega^2}$, the distinguished extension $T^{(D)}$ can be characterised in terms of Sobolev regularity. Indeed, combining \Cref{thm:distinghuished-self-ajdoint} with \cite[Theorem 1.4.5.3]{grisvard}, $T^{(D)}$ is the unique extension of $\Hmin$ that verifies  
\[
\mathcal{D}(T^{(D)}) \subset H^{s}(S_\omega;\C^2),\quad\text{for}\ s<\frac{1}{2}+\sqrt{\frac{\pi^2}{4\omega^2}-\nu^2}.
\]
Nevertheless, this characterisation fails in the case $\nu^2\geq \frac{\pi^2}{4\omega^2}$, where one can see that there exists infinite self-adjoint extension verifying the property:
\[
\mathcal{D}(T) \subset H^{s}(S_\omega;\C^2),\quad\text{for}\ s<\frac{1}{2}.
\]
\end{remark}

Having extablished the self-adjointness of Coulomb-type perturbations of $H_\omega$, we turn our analysis to a description of their spectrum. In the following part of the introduction we consider in general $m \geq 0$ in the definition of $H_\omega$.
Our first result in this directions complements \Cref{cor:kato-rellich} and \Cref{thm:self-ajdoint-realisations} investigating the stability of the essential spectrum of $H_\omega$ under general Coulomb-type perturbations.%\footnote{\biagio{da aggiustare quando sar\`a chiaro cosa facciamo sullo spettro essenziale}}
\begin{theorem}\label{thm:ess-spect}
	Let $\Hmin$ be the operator defined in \eqref{eq:def-H_min} and let $T$ be any self-adjoint extension of $\Hmin$. Then
		%\begin{enumerate}[label={(\roman*)}]
			%\item If $0<\omega<\pi$

\[
				\sigma_{\mathrm{ess}}(T)=(-\infty,-m]\cup[m,+\infty) \, .
\]
	Moreover, when $\frac{\pi^2 - \omega^2}{4\omega^2}<\nu^2$, that is when $\Hmin$ is not essentially self-adjoint, for any $\lambda \in (-m,m)$ there exists $T$ a self-adjoint extension of $\Hmin$ for which $\lambda $ is an eigenvalue.
		%\end{enumerate}
\end{theorem}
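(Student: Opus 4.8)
The plan is to treat the two halves of the statement separately, relying throughout on the radial/partial–wave picture underlying \Cref{thm:self-ajdoint-realisations}, in which $\Hmin$ is unitarily equivalent to a direct sum $\bigoplus_{k\in\N}h_k$ of half–line Dirac operators carrying the Coulomb tail $\nu/r$ and effective angular momenta $\kappa_k$ increasing to $+\infty$; the channel $h_k$ is in the limit–circle case at $r=0$ (deficiency indices $(1,1)$) exactly when $\kappa_k^2<\nu^2+\tfrac14$, that is for the finitely many indices $k=0,\dots,d$ with $d$ as in \eqref{eq:def-d}, while all remaining channels are essentially self–adjoint. For the inclusion $(-\infty,-m]\cup[m,+\infty)\subseteq\sigma_{\mathrm{ess}}(T)$ I would build singular Weyl sequences supported deep inside $S_\omega$, far from both the vertex and the two edges: there the boundary condition is trivially met and $V(x)=\nu/|x|$ is negligible, so $T$ acts like the free operator $D_0$ on $\R^2$, whose spectrum is $(-\infty,-m]\cup[m,+\infty)$. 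Concentrating modulated plane waves at an energy $\lambda$ with $|\lambda|\ge m$ at larger and larger radii yields the required Weyl sequence for every such $\lambda$, independently of the extension $T$.

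For the reverse inclusion it suffices to prove that $(-m,m)$ contains no essential spectrum. I would localise with a smooth partition $\phi_1^2+\phi_2^2\equiv 1$, with $\phi_1$ supported in $B_2$ and $\phi_2$ in the exterior of $B_1$, and observe that $T-\sum_j\phi_j T\phi_j=-\sum_j\phi_j[D_0,\phi_j]$ with $[D_0,\phi_j]=-\ii\,\boldsymbol\sigma\cdot(\nabla\phi_j)$ a bounded multiplication supported in the annulus $B_2\setminus B_1$; being bounded and compactly supported away from the vertex, this error is relatively $T$–compact and does not alter the essential spectrum. A standard Weyl–sequence argument then splits $\sigma_{\mathrm{ess}}(T)$ into the contributions of the two regions. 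The exterior contribution equals $\sigma_{\mathrm{ess}}(H_\omega)=(-\infty,-m]\cup[m,+\infty)$, since on the support of $\phi_2$ the potential $V$ is bounded and tends to $0$, so that $\phi_2 V\phi_2$ is relatively $H_\omega$–compact; in particular it misses the open gap. The interior region $B_2\cap S_\omega$ is bounded, so its contribution has compact resolvent — channelwise this is the one–dimensional Rellich criterion on a finite interval, valid for every boundary condition at $0$, whether the endpoint is limit–circle or limit–point — and hence contributes no essential spectrum at all. Combining the two regions gives $\sigma_{\mathrm{ess}}(T)\cap(-m,m)=\varnothing$ and completes the first claim.

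For the second statement assume $\nu^2>\frac{\pi^2-\omega^2}{4\omega^2}$, which by \eqref{eq:def-d} forces $d\ge 0$, so that the channel $h_0$ is limit–circle at the origin. Fix $\lambda\in(-m,m)$ (the assertion is empty when $m=0$) and solve the radial eigenvalue equation $(h_0-\lambda)u_\lambda=0$. Because $\lambda$ lies strictly inside the gap, at infinity the free radial system has one exponentially decaying and one exponentially growing solution, and I would select the decaying one, so that $u_\lambda\in L^2$ near $r=\infty$; because $h_0$ is limit–circle at $0$, both local behaviours near the origin are square integrable, so automatically $u_\lambda\in L^2$ near $r=0$ as well. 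Thus $u_\lambda$ is a genuine eigenfunction of the maximal operator at energy $\lambda$. Since $\lambda\in\R$, the Lagrange boundary form at $0$ vanishes on $u_\lambda$, so $\mathrm{span}\{u_\lambda\}$ determines a self–adjoint boundary condition in the channel $k=0$; choosing it there and any self–adjoint boundary condition in the remaining channels produces a self–adjoint extension $T$ of $\Hmin$ for which $u_\lambda$, transported back through the unitary decomposition, is an eigenfunction with eigenvalue $\lambda$. As $\lambda$ is isolated from $\sigma_{\mathrm{ess}}(T)=(-\infty,-m]\cup[m,+\infty)$, it is a genuine discrete eigenvalue, proving the claim.

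The step I expect to be most delicate is the analysis of the interior region: one must confirm that inserting the Coulomb singularity at the vertex does not generate essential spectrum, i.e.\ that the bounded–region problem has compact resolvent for every self–adjoint extension, including the limit–circle channels where the domain at $0$ is enlarged. The cleanest justification is the channelwise reduction above, where compactness follows from the Rellich criterion on a finite interval; an alternative, which I would keep as a fallback, is to note that the finitely many extension–ambiguous channels differ from a fixed reference extension only by a finite–rank resolvent perturbation, so that $\sigma_{\mathrm{ess}}(T)$ is independent of $T$ and may be computed on the distinguished extension, whose gap spectrum is manifestly discrete.
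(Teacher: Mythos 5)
Your argument for the second statement has a fatal gap: it relies on the partial--wave decomposition of the \emph{massive} operator, which is not available. \Cref{prop:UnitaryTransformation} holds only for $m=0$: the mass term $m\sigma_3$ does not leave the subspaces $\operatorname{span}\{f_k^+,f_k^-\}$ invariant (one checks that $\sigma_3 f_k^+$ is orthogonal to $f_k^+$ and is not proportional to $f_k^-$, hence it spills into channels $j\neq k$); this is exactly the obstruction the paper records at the end of the introduction, saying that the infinite mass boundary conditions prevent the massive operator from being diagonalised by the unitary transformation of \Cref{prop:UnitaryTransformation}. Your construction --- solving $(h_0-\lambda)u_\lambda=0$ and selecting the branch that decays exponentially at infinity --- implicitly puts the mass into the radial operator $h_0$, i.e.\ it works in a channel that does not exist for the operator under study; and, as you note yourself, for $m=0$ the assertion is empty, so the only case with content is precisely the one where your reduction fails. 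The paper's proof avoids the radial picture entirely here: given $\lambda\in(-m,m)$, either $\lambda$ is already an eigenvalue of the chosen extension $T$, or $\lambda\in\rho(T)$; in the latter case, since points of $\rho(T)$ are points of regular type for $\Hmin$, one has $\dim\ker(H_{\operatorname{min}}^*-\lambda)=d+1\geq 1$, and the Kre\u{\i}n--Vi\v{s}ik--Birman extension with zero Birman parameter, whose domain is $\mathcal{D}(\Hmin)\,\dot{+}\,\ker(H_{\operatorname{min}}^*-\lambda)$, is a self-adjoint extension admitting $\lambda$ as an eigenvalue. Any repair of your argument must likewise work with deficiency elements of the genuine (massive) adjoint rather than with solutions of a massive radial ODE.

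The first half of your proposal is conceptually viable but leaves two real steps unproven. First, ``the interior region has compact resolvent, channelwise by Rellich on a finite interval'' is insufficient even where the decomposition applies ($m=0$): a countable direct sum of operators, each with compact resolvent, has compact resolvent only if the resolvent norms tend to zero, i.e.\ only if the channel spectra escape to infinity as $k\to\infty$; this is true here (because $\lambda_k\to\infty$, via a Hardy-type lower bound) but must be supplied. Second, splitting $\sigma_{\mathrm{ess}}(T)$ into interior/exterior contributions, and the claim that the localisation error $\phi_j[D_0,\phi_j]$ is relatively $T$-compact, require local compactness of the domain of $T$ on the annulus; that is exactly where the paper's reduction does real work (\Cref{rem:sobolev-regularity}: every extension is a finite-rank resolvent perturbation of one with $\mathcal{D}(T)\subset H^s(S_\omega;\C^2)$, $s<1/2$). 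Your fallback --- finite deficiency indices imply all extensions share the same essential spectrum, then compute it for one reference extension --- is in fact the paper's own first step; but the gap spectrum of that reference extension is not ``manifestly discrete'': the paper proves it by showing that $(T-z)^{-1}-(H_\omega^{(D)}-z)^{-1}$ is compact (cutoff decomposition, compact Sobolev embedding near the vertex, decay of $V$ and of the cutoff gradients at infinity) and then invoking $\sigma_{\mathrm{ess}}(H_\omega^{(D)})=(-\infty,-m]\cup[m,+\infty)$ from \cite{LTOB2018} together with Weyl's theorem.
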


\begin{remark}
The result of Theorem \ref{thm:ess-spect} can be translated immediately to the case of singular potentials as $V$ verifying the hypotesis of \Cref{cor:kato-rellich}. In this case the self-adjoint realisation $H_\omega+V$ has $\sigma_{\mathrm{ess}}(H_\omega+V)=(-\infty,-m] \cup [m,+\infty)$.
\end{remark}

%\biagio{Quel che segue \`e falso. Stavo cercando di generalizzare la disuguaglianza di Matteo al caso Coulomb generico. Ma c'\`e un errore nella dimostrazione, guarda l'ultima disuguaglianza della proof di \Cref{lem:gap}. Si pu\`o aggiustare?}
%After establishing the properties of essential spectrum, it is of interest to analyse the discrete one.
%In the next result we show the existence of a gap around the origin in the discrete spectrum of the operators described by \Cref{cor:kato-rellich}.
%\begin{theorem}\label{prop:gap}
%In the assumptions of \Cref{cor:kato-rellich}, 
%\begin{equation}\label{eq:spectral.enclosure}
%\sigma_d(H_\omega + V) \subset 
%\big[-m, -m \sqrt{1-|\nu|}\big] \cup \big[m \sqrt{1-|\nu|}, m\big].
%\end{equation}
%\end{theorem}
%\begin{remark}
%We do no know if the spectral enclosure in \eqref{eq:spectral.enclosure} is sharp. In \cite{CPV20} it is shown that the set $(-m\sqrt{1-\nu^2},m\sqrt{1-\nu^2})$ is free of eigenvalues in the context of Coulomb-type perturbations in the whole three dimensional space, and this result is sharp. It is not possibile to adapt immediately the results in \cite[Theorem 1.1]{CPV20} to this context, because the spin-orbit operator $K_\omega$ (see \eqref{eq:defn.spin-orbit} for its definition) and $m\sigma_3$ do not commute, due to the boundary conditions.
%\end{remark}

The infinite mass boundary conditions prevent the massive operator $H_\omega$ to be diagonalised by the unitary transformation of Proposition \ref{prop:UnitaryTransformation}. This makes difficult to mimic the computation of eigenvalues or the characterisation of the discrete spectrum of Dirac operators with explicit Coulomb potentials as in \cite{GM-Dirac-Eigenvalues}.
So we can not provide further details in the case of an explicit Coulomb potential using the radial decomposition.
%\begin{theorem}
%If $\omega |\nu| < \frac{\pi}{2}$ then every self-adjoint realisation $\tilde{H}$ of $H$ has $\sigma_{\text{\mathrm{ess}}}(\tilde{H}) = (-\infty,-m] \cup [m,\infty)$. 
%\end{theorem}

%\section{Proof of Main results}
\section{The radial operator and proofs of \texorpdfstring{\Cref{thm:self-ajdoint-realisations}}{Theorem 1.7} and \texorpdfstring{\Cref{thm:distinghuished-self-ajdoint}}{Theorem 1.9}}
\label{sec:radial-operator}
To prove \Cref{thm:self-ajdoint-realisations} and \Cref{thm:distinghuished-self-ajdoint}, we decompose the Dirac operators $H_\text{min}$ for $m=0$ in the direct sum of one dimensional Dirac operators on the half-line. We introduce some notations: for $k\in\N$, set
\begin{equation}\label{eq:def-lambda_k}
	\lambda_k\;:=\;\frac{(2k+1) \pi}{2 \omega}\,,
\end{equation}
and let $d_{\nu,k}$ be the differential expression
\begin{equation}\label{eq:def-d_k}
	d_{\nu,k}\;:=%\;h_{\nu,k}
	\begin{pmatrix}
		\frac{\nu}{r} & -\partial_r-\frac{\lambda_k}{r} \\
		\partial_r -\frac{\lambda_k}{r} & \frac{\nu}{r}
	\end{pmatrix} \, .
\end{equation}
We define the following Dirac operators on the half-line:
	\begin{equation}\label{eq:def-h_k}
	\begin{split}
	& \D(h_{\nu,k}):= C^\infty_c\left ((0,+\infty);\C^2\right)\\
	& h_{\nu,k} u:=d_{\nu,k} u.
	\end{split}
	\end{equation}
%	andh_{\nu,k}
%	\begin{equation}\label{eq:def-h_k^*}
%	h_k^*:=(h_k)^*.
%	\end{equation}
%	
%	\begin{equation}\label{eq:def-h_k^*}
%	\begin{split}
%	& \D(h_k^*):=\{u\in L^2\left((0,+\infty);\C^2\right):d_{\nu,k} u\in L^2\left((0,+\infty);\C^2\right)\},\\
%	& h^*_k u:=d_{\nu,k} u 
%	\end{split}
%	\end{equation}
%	where $d_{\nu,k} u$ is read in the classical sense in \eqref{eq:def-h_k} and in the  distributional sense in \eqref{eq:def-h_k^*}. By construction, $h_k$ is symmetric and $h_k^*$ is the adjoint of $h_k$.

\begin{proposition}\label{prop:radial.decomposition}
Let $\nu \in \R$,  $\omega \in (0,2\pi]$, $S_\omega$ defined as in \eqref{eq:defn.sector}, $H_{\mathrm{min}}$ defined as in \eqref{eq:def-H_min} and for all $k \in \N$ let $h_{\nu,k}$  be defined as in \eqref{eq:def-h_k}. Then
\[
		 H_{\operatorname{min}} \;\cong\; \bigoplus_{k \in \N} h_{\nu,k}\
\]
	where `` $\cong$ '' means that the operators are unitarily equivalent. 
\end{proposition}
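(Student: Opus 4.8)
The plan is to realise the claimed unitary equivalence as a composition of four elementary unitaries: the passage to polar coordinates, a $\theta$-dependent ``gauge'' rotation that freezes the Pauli matrices, the radial rescaling by $r^{1/2}$ turning the measure $r\,\ud r$ into $\ud r$, and finally the expansion in a suitable angular orthonormal basis. First I would write $L^2(S_\omega;\C^2)\cong L^2\big((0,\infty)\times(0,\omega);\C^2;\,r\,\ud r\,\ud\theta\big)$ and express $D_0$ in polar coordinates as $D_0=\sigma_r\big(-\ii\partial_r+\tfrac1r\sigma_3\partial_\theta\big)$, where $\sigma_r=\cos\theta\,\sigma_1+\sin\theta\,\sigma_2$. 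Using $e^{\ii\theta\sigma_3/2}\sigma_r e^{-\ii\theta\sigma_3/2}=\sigma_1$, conjugation by the unitary multiplication $\psi\mapsto e^{\ii\theta\sigma_3/2}\psi$ removes the $\theta$-dependence of the matrices, and after the substitution $\psi=r^{-1/2}\Theta$ (which absorbs the zero-order radial term) one is left with the constant-coefficient expression
\[
-\ii\sigma_1\partial_r-\tfrac{\ii}{r}\sigma_2\partial_\theta+\tfrac{\nu}{r}
\]
on $L^2\big((0,\infty)\times(0,\omega);\C^2;\,\ud r\,\ud\theta\big)$. Computing how $\mathcal B_\nn\psi=\psi$ transforms under the same conjugation, the infinite mass condition becomes the $\theta$-independent pair $\sigma_1\Theta=\Theta$ at $\theta=0$ and $\sigma_1\Theta=-\Theta$ at $\theta=\omega$.

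Next I would diagonalise the angular operator $\mathcal K:=-\ii\sigma_2\partial_\theta$ on $L^2((0,\omega);\C^2)$ equipped with the two boundary conditions above. Solving $\mathcal K\Theta=\mu\Theta$ explicitly through $\Theta(\theta)=e^{\ii\mu\sigma_2\theta}\Theta(0)$ and imposing the boundary conditions reduces everything to
\[
\cos(\mu\omega)=0,
\]
so that the eigenvalues are exactly $\mu=\pm\lambda_k$ with $\lambda_k=\tfrac{(2k+1)\pi}{2\omega}$, $k\in\N$. Since $\mathcal K$ with these boundary conditions is a regular self-adjoint first-order system on a bounded interval, it has compact resolvent and its normalised eigenfunctions form a complete orthonormal system of $L^2((0,\omega);\C^2)$. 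Moreover $\sigma_1$ anticommutes with $\sigma_2$ and preserves both boundary conditions, hence it maps the $(+\lambda_k)$-eigenspace onto the $(-\lambda_k)$-eigenspace, pairing the angular modes two by two and indexing the pairs by $k\in\N$.

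Expanding a generic $\Theta$ in this basis, the zero-order angular term contributes $\pm\lambda_k/r$ on the two members of each pair, while the radial term $-\ii\sigma_1\partial_r$ couples them; thus the operator leaves each two-dimensional angular block invariant and restricts there to a $2\times2$ radial Dirac expression on $L^2((0,\infty);\C^2)$. A direct computation shows this expression equals $\tfrac{\nu}{r}+\tfrac{\lambda_k}{r}\sigma_3-\ii\partial_r\,\sigma_1$, which coincides with $d_{\nu,k}$ after a single $r$-independent rotation $W\in\mathrm{SU}(2)$ implementing $\sigma_3\mapsto-\sigma_1$, $\sigma_1\mapsto\sigma_2$ (the associated map of axes has determinant $+1$, so such a $W$ exists and is the same for every $k$). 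Absorbing $W$ into the unitary identifies each block with $h_{\nu,k}$, giving $\Hmin\cong\bigoplus_{k\in\N}h_{\nu,k}$.

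The hard part will be matching the domains rather than the formal action. I would verify that $\mathcal K$ with the transformed boundary conditions is genuinely self-adjoint with compact resolvent, so that completeness of the angular eigenbasis holds, and that the composite unitary carries $\D(\Hmin)$ onto the correct domain of the direct sum. Concretely, I would check that every finite combination $\sum_k\phi_k(r)\,\xi_k(\theta)$ with $\phi_k\in C^\infty_c((0,\infty);\C^2)$ lies in $\D(\Hmin)$ and that such combinations form a core, and conversely that the angular expansion of an element of $C^\infty_c(\overline{S_\omega}\setminus\{0\};\C^2)$ satisfying $\mathcal B_\nn\psi=\psi$ has radial coefficients in $C^\infty_c((0,\infty);\C^2)$. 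Tracking the transformation of the boundary conditions through the gauge rotation and verifying this core correspondence are the two genuinely substantial points; the remaining manipulations are routine.
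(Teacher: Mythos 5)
Your proposal is correct, and its skeleton is the same as the paper's (carried out in \Cref{sec:appendix-angular}): pass to polar coordinates with the $r^{1/2}$ weight, diagonalise an angular operator that encodes the infinite-mass boundary conditions, and read off $2\times 2$ radial blocks. The difference is in how the angular data are obtained. The paper never freezes the $\theta$-dependence of the Pauli matrices: it keeps the spin-orbit operator $K_\omega=\tfrac12\mathbbm{1}-\ii\sigma_3\partial_\theta$ and quotes \cite[Lemma 2.4]{PVDB2021} for its self-adjointness, its eigenvalues $\pm\lambda_k$, the explicit eigenbasis $f_k^\pm$, and the intertwining identity $-\ii(\boldsymbol{\sigma}\cdot\boldsymbol{e_r})f_k^\pm=\pm f_k^\mp$; with that particular basis the blocks come out literally equal to $d_{\nu,k}$, with no further conjugation. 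You instead derive everything by hand: the gauge rotation $e^{\ii\theta\sigma_3/2}$ makes the matrices constant, your transformed boundary conditions ($\sigma_1\Theta=\Theta$ at $\theta=0$, $\sigma_1\Theta=-\Theta$ at $\theta=\omega$) and the quantisation condition $\cos(\mu\omega)=0$ are exactly right (they reproduce the paper's $\mathcal{D}(K_\omega)$ and spectrum $\{\pm\lambda_k\}$), and the pairing of the $\pm\lambda_k$ modes by $\sigma_1$ plays the role of the paper's intertwining identity. The price is the final $k$-independent rotation $W$, which does exist: your block is $\tfrac{\nu}{r}\mathbbm{1}+\tfrac{\lambda_k}{r}\sigma_3-\ii\sigma_1\partial_r$, the map $\sigma_3\mapsto-\sigma_1$, $\sigma_1\mapsto\sigma_2$ has determinant $+1$, and conjugation by the corresponding $W\in\mathrm{SU}(2)$ yields $\tfrac{\nu}{r}\mathbbm{1}-\tfrac{\lambda_k}{r}\sigma_1-\ii\sigma_2\partial_r=d_{\nu,k}$. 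What you gain is self-containedness (no external lemma); what the paper gains is brevity and a basis tailored so the blocks need no final rotation. On the point you flag as the hard part: the paper dispatches the domain correspondence in one line (equation \eqref{eq:char.dom.min}, justified by $f_k^\pm$ being smooth up to $\theta=0,\omega$), and in fact its ``if and only if'' is stated too strongly — having every radial coefficient in $C^\infty_c\left((0,+\infty);\C^2\right)$ does not force $\psi\in\D(\Hmin)$, since the supports of the coefficients may shrink towards the vertex — so the honest conclusion, in your approach and in the paper's alike, is that the two operators have the same closure: finite angular sums lie in $\D(\Hmin)$, and truncating the angular series of any $\psi\in\D(\Hmin)$ converges in graph norm because the operator acts blockwise. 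Your closing checklist covers precisely these verifications, so no essential idea is missing from your plan.
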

\begin{remark}\label{rmk:singular}
The \emph{partial wave subspace} decomposition of $H_\omega$ (given by \Cref{prop:radial.decomposition} for $\nu=0$) leads us to the analysis of an orthogonal sum of half-line Dirac operators perturbed by a off-diagonal Coulomb potentials, expressed via the eigenvalues of the \emph{spin-orbit operator}. This is the reason why we perturb the operator $H_\omega$ with an external Coulomb-potential. 
Roughly speaking, the presence of a corner in the origin and
the presence of an external Coulomb perturbation have the same singular nature: they both imply the presence of a singular term of order $\sim 1/r$ in the radial operators $h_{\nu,k}$.
\end{remark}

The proof of \Cref{prop:radial.decomposition} exploits the radial symmetry of the problem and takes advantage of the decomposition of the Hilbert space $L^2(S_\omega;\C^2)$ in the \emph{partial wave subspaces}. We omit these details here and we leave the proof of \Cref{prop:radial.decomposition} to \Cref{sec:appendix-angular}.

Thanks to \Cref{prop:radial.decomposition}, the proof of  \Cref{thm:self-ajdoint-realisations} follows from the analysis of the same properties on the reduced operators $h_{\nu,k}$.
For any $k \in \N$, the operator $h_{\nu,k}$ is a \emph{radial} Dirac operator and it has been studied in several works. Indeed,  the operator $h_{\nu,k}$ is precisely in the form of the one defined in \cite[Equation (2.19)]{CP2018} with $m=\lambda=\mu=0$ and $k_j=-\lambda_k$.

In the following proposition we study its self-adjointness.%: its statement is immediate from Theorems 1.1, 1.2 and 1.3 from \cite{CP2018} and Proposition A.1 in \cite{ELS1} for $\nu =0$.

\begin{proposition}\label{prop:description.extensions}
Let $k \in \N$ and let $h_{\nu,k}$ be defined as in \eqref{eq:def-h_k}. Let  moreover \begin{equation}\label{eq:def-chi}
	\chi\in C^\infty_c(\R;[0,1])\ \text{such that}\ \chi'(r)\leq 0\ \text{and}\
	\chi(r)
	=
	\begin{cases}
		1&\text{for}\ r\leq 1\\
		0&\text{for}\ r\geq 2
	\end{cases}.
\end{equation}
%\eqref{eq:def.dirac} and \eqref{eq:def.V}
%respectively, 
The following hold:
	\begin{enumerate}[label=(\roman*)]\label{thm:self-ajdoint-realisations-rad}
  		\item\label{item:def-self-adjoint-realisation-radial-ottimo>1/4}
  		If $\lambda_k^2-\nu^2\geq 1/4$, then $h_{\nu,k}$ is essentially self-ajdoint. 
		Moreover
\[
\begin{split}
		& \mathcal{D}(\overline{h_{\nu,k}})= H_0^1((0,+\infty);\C^2)  \quad \text{ if }\lambda_k^2-\nu^2 >\frac14, \\
		& \mathcal{D}(\overline{h_{\nu,k}}) \supsetneq H_0^1((0,+\infty);\C^2)  \quad \text{ if }\lambda_k^2-\nu^2 = \frac14 ;
\end{split}
\]
    	\item\label{item:def-self-adjoint-realisation-radial-sottocritico}
    	If $0<\lambda_k^2-\nu^2<1/4$ then $h_{\nu,k}$ is not essentially self-adjoint and it admits a one parameter family of self-adjoint extensions $\{t^{(\alpha)}_{\nu,k}\}_{\alpha\in [0,\pi)}$ such that
\[
    		\D({t^{(\alpha)}_{\nu,k}})=\operatorname{span}\big\{u_{\nu,k}^{(\alpha)}\big\}+ H_0^1((0,+\infty);\C^2),
\]
    	being
\[
    		u_{\nu,k}^{(\alpha)}(r)=P_{\nu,k}\cdot
    		\begin{pmatrix}
    		\cos(\alpha) r^{\sqrt{\lambda_k^2-\nu^2}})\\
    		\sin(\alpha) r^{-\sqrt{\lambda_k^2-\nu^2}})
    	\end{pmatrix}\chi(r)
	\quad \text{ for }r>0,
\]
    	and where $P_{\nu,k} \in \R^{2\times2}$ is the invertible matrix
\[
		P_{\nu,k}:= \frac{1}{2\sqrt{\lambda_k^2-\nu^2}\, (-\lambda_k-\sqrt{\lambda_k^2-\nu^2})}
		\begin{pmatrix}
 			-\lambda_k-\sqrt{\lambda_k^2-\nu^2} 	& \nu \\
			-\nu        						& \lambda_k+ \sqrt{\lambda_k^2-\nu^2}
 			\end{pmatrix}.
\]
		
		\item\label{item:def-self-adjoint-realisation-radial-critico}
		If $\lambda_k^2-\nu^2=0$ then $h_{\nu,k}$ is not essentially self-ajdoint and it admits a one parameter family of self-adjoint extension $\{t^{(\alpha)}_{\nu,k}\}_{\alpha\in [0,\pi)}$ such that
\[
    		\D({t^{(\alpha)}_{\nu,k}})=\operatorname{span}\big\{u_{\nu,k}^{(\alpha)}\big\}+H_0^1((0,+\infty);\C^2),
\]
    	being
\[
    		u_{\nu,k}^{(\alpha)}(r)=(Q_{\nu,k}\log(r)+\mathbbm{1}_2)\cdot
    		\begin{pmatrix}
    		\cos(\alpha) \\
    		\sin(\alpha)
    	\end{pmatrix}\chi(r)
	\quad \text{ for }r>0,
\]
    	where $Q_{\nu,k}\in \R^{2\times2}$, $Q_{\nu,k}^2=\mathbbm{0}_{2\times 2}$ is defined as follows
\[
			Q_{\nu,k}:=  			
  			\begin{pmatrix}
 				\lambda_k	& -\nu \\
				\nu       	& -\lambda_k
 			\end{pmatrix}.
\]
 		\item\label{item:def-self-adjoint-realisation-radial-sopracritico}
 		If $\lambda_k^2-\nu^2<0$ then $h_{\nu,k}$ is not essentially self-ajdoint and it admits a one parameter family of self adjoint extensions $\{t^{(\alpha)}_{\nu,k}\}_{\alpha\in [0,\pi)}$ such that
\[
    		\D({t^{(\alpha)}_{\nu,k}})=\operatorname{span}\big\{u_{\nu,k}^{(\alpha)}\big\}+H_0^1((0,+\infty);\C^2),
\]
    	being
\[
    		u_{\nu,k}^{(\alpha)}(r)=R_{\nu,k}\cdot
    		\begin{pmatrix}
    		\cos(\alpha) r^{\ii \sqrt{\nu^2-\lambda_k^2}})\\
    		\sin(\alpha) r^{-\ii\sqrt{\nu^2-\lambda_k^2}})
    	\end{pmatrix}\chi(r)
	\quad \text{ for }r>0
\]
    	where and $R_{\nu,k} \in \R^{2\times2}$ is the invertible matrix
\[
			R_{\nu,k}:= \frac{1}{2\ii\sqrt{\nu^2-\lambda_k^2}\, (-\lambda_k-\ii\sqrt{\nu^2-\lambda_k^2})}
		\begin{pmatrix}
 			-\lambda_k-\ii\sqrt{\nu^2-\lambda_k^2} 	& \nu \\
			-\nu        						& \lambda_k +\ii \sqrt{\nu^2-\lambda_k^2}
 			\end{pmatrix}.
\]
  \end{enumerate}
\end{proposition}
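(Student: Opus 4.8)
The plan is to treat $h_{\nu,k}$ as a singular first-order system on the half-line and to read off the deficiency indices from the Weyl limit-point/limit-circle alternative at the two endpoints. Since $h_{\nu,k}$ coincides with the radial operator of \cite[Eq.~(2.19)]{CP2018} for $m=\lambda=\mu=0$ and $k_j=-\lambda_k$, the cleanest route is simply to transcribe the classification obtained there; I nonetheless outline the self-contained argument. First I would identify $h_{\nu,k}^{*}$ with the maximal operator, i.e.\ $d_{\nu,k}$ acting on $\{u\in L^2((0,\infty);\C^2): d_{\nu,k}u\in L^2\}$, and dispose of the regular endpoint $r=+\infty$: there the $\nu/r$ and $\lambda_k/r$ terms are integrable perturbations of the massless free radial system, so for $z\in\C\setminus\R$ the equation $d_{\nu,k}u=zu$ has one exponentially growing and one exponentially decaying solution; hence exactly one is square-integrable near infinity and $+\infty$ is in the limit-point case. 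Consequently the deficiency indices are decided solely by the behaviour at $r=0$.

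The core computation is the Frobenius (indicial) analysis at the origin, where the $1/r$ terms dominate while the spectral term $zu$ and the regular remainder are subordinate. Substituting $u\sim r^{s}v$ into the leading homogeneous system yields $\det\!\begin{pmatrix}\nu & -(s+\lambda_k)\\ s-\lambda_k & \nu\end{pmatrix}=s^2-(\lambda_k^2-\nu^2)=0$, hence the exponents $s_\pm=\pm\sqrt{\lambda_k^2-\nu^2}$, whose eigenvectors one collects precisely as the columns of $P_{\nu,k}$ (resp.\ $R_{\nu,k}$ in the oscillatory regime), establishing their invertibility. Recalling that $r^{s}\in L^2(0,1)$ iff $\operatorname{Re}s>-\tfrac12$, the four regimes split as claimed: if $\lambda_k^2-\nu^2\geq\tfrac14$ then $s_-\leq-\tfrac12$ is not square-integrable and only $s_+$ survives, so $0$ is limit point; if $0<\lambda_k^2-\nu^2<\tfrac14$ both real exponents lie in $(-\tfrac12,\tfrac12)$ and both solutions are $L^2$; if $\lambda_k^2-\nu^2=0$ the repeated root forces the logarithmic solution built from $Q_{\nu,k}$ (with $Q_{\nu,k}^2=\mathbbm{0}_{2\times 2}$), still $L^2$ since $(\log r)^2$ is integrable at $0$; and if $\lambda_k^2-\nu^2<0$ the purely imaginary exponents give $|r^{s_\pm}|\equiv 1$, again $L^2$. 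Thus $0$ is limit circle in cases (ii)--(iv).

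Combining the two endpoints gives the deficiency indices: limit point at both ends in case (i), so $h_{\nu,k}$ is essentially self-adjoint, while limit circle at $0$ and limit point at $+\infty$ force indices $(1,1)$ in the remaining cases, whence a one-parameter family of self-adjoint extensions. To exhibit them I would set up a boundary triplet (equivalently, von Neumann's formula) adapted to the two indicial behaviours: every self-adjoint extension is $\overline{h_{\nu,k}}$ enlarged by the span of a single function matching a prescribed real combination $(\cos\alpha,\sin\alpha)$ of the leading asymptotics at $0$, localised by the cutoff $\chi$ so that it lies in $\mathcal{D}(h_{\nu,k}^{*})$; self-adjointness of the associated boundary form makes the parameter real, i.e.\ $\alpha\in[0,\pi)$, yielding exactly $u_{\nu,k}^{(\alpha)}$ and the decomposition $\operatorname{span}\{u_{\nu,k}^{(\alpha)}\}+H_0^1((0,\infty);\C^2)$. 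For the domain statements in case (i), I would note that when $\lambda_k^2-\nu^2>\tfrac14$ the surviving solution $r^{s_+}$ has $s_+>\tfrac12$, hence lies in $H^1$ near $0$, and the graph norm controls the full $H^1$-norm, giving $\mathcal{D}(\overline{h_{\nu,k}})=H_0^1$; at the threshold $\lambda_k^2-\nu^2=\tfrac14$ one has $s_+=\tfrac12$ with $r^{1/2}\in L^2\setminus H^1$, so the closure domain strictly contains $H_0^1$.

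The step I expect to be the main obstacle is making the asymptotics at $0$ rigorous rather than merely formal: one must show that every $L^2$-solution of $(d_{\nu,k}-z)u=0$ genuinely carries the asserted leading behaviour (controlling the subordinate energy term and the regular part, e.g.\ by a variation-of-constants/fixed-point argument in weighted spaces), and then match the von Neumann parametrisation to the explicit matrices $P_{\nu,k},Q_{\nu,k},R_{\nu,k}$ so that the boundary condition is manifestly self-adjoint and labelled by a single $\alpha\in[0,\pi)$. The logarithmic critical case (iii) and the oscillatory supercritical case (iv) demand the most care, since the naive power solutions degenerate there; both are, however, exactly the situations already handled in \cite{CP2018}, which I would invoke to shortcut these computations.
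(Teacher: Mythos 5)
Your outline rests on the same pillar as the paper's proof: $h_{\nu,k}$ is exactly the radial operator of \cite[Equation (2.19)]{CP2018}, and both you and the authors ultimately import the hard analysis from there (the paper quotes \cite[Theorems 1.1, 1.2, 1.3]{CP2018} for the four regimes; your limit-point/limit-circle dichotomy and Frobenius analysis is precisely what underlies those theorems). Your indicial computation is correct — the exponents $s_\pm=\pm\sqrt{\lambda_k^2-\nu^2}$, the identification of the eigenvectors with the columns of $P_{\nu,k}$ (resp.\ $R_{\nu,k}$), the $L^2$ threshold $\operatorname{Re}s>-\tfrac12$, the limit-point behaviour at infinity, and the argument that the cutoff null solution $r^{1/2}\chi$ witnesses $\mathcal D(\overline{h_{\nu,k}})\supsetneq H^1_0$ at threshold are all sound. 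The difference is one of emphasis: the paper spends no effort on the ODE side and all of its effort on the domain bookkeeping, and that is where your proposal has a genuine hole.

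Concretely, every item of the proposition uses the identification $\mathcal{D}(\overline{h_{\nu,k}})=H_0^1((0,+\infty);\C^2)$ for $\lambda_k^2-\nu^2\neq\tfrac14$: it is the assertion of item (i), and it is what lets you write the extension domains in (ii)--(iv) as $\operatorname{span}\{u^{(\alpha)}_{\nu,k}\}+H_0^1$ rather than as $\operatorname{span}\{u^{(\alpha)}_{\nu,k}\}+\mathcal{D}(\overline{h_{\nu,k}})$. You assert it only in case (i) (``the graph norm controls the full $H^1$-norm'') and use it silently in the other three cases, where it is least obvious. It is not routine: a direct estimate of $\|h_{\nu,k}u\|^2$ produces cross terms between the two components with no definite sign, and the inclusion $\mathcal{D}(\overline{h_{\nu,k}})\subset H^1_0$ is exactly what fails at $\lambda_k^2-\nu^2=\tfrac14$, so any proof must see the parameter. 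The paper isolates this as \Cref{prop:closure} and proves it by intertwining $d_{\nu,k}$ with an off-diagonal model operator through explicit matrices, $M_1 d_{\nu,k}=\tilde d_{\nu,k}M_2$, then invoking the one-dimensional Hardy inequality and \cite[Appendix A]{FL21} (extended to purely imaginary effective coupling for the supercritical case, and with a separate nilpotent-matrix argument when $\lambda_k^2-\nu^2=0$). Relatedly, to match the abstract one-parameter family with the explicit profiles $u^{(\alpha)}_{\nu,k}$, the paper takes a generic $u$ in the extension domain, subtracts the cutoff leading term, and shows the remainder $w$ lies in $H^1_0$ by first establishing $\int_0^\infty |w|^2/r^2\,dr<\infty$ via the weighted estimate \cite[Equation (3.3)]{CP2018}; in your von~Neumann framework this step becomes the control of the subleading corrections of the deficiency solutions, which your fixed-point plan can deliver, but it still presupposes the closure identification above. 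These two lemmas are where the actual proof lives; your proposal treats them as routine, and that is the gap to fill.
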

Before giving the proof of \Cref{thm:self-ajdoint-realisations-rad}, let  us now first characterise the closure of $h_{\nu,k}$.
\begin{proposition}\label{prop:closure}
Let $h_{\nu,k}$ be defined as in \eqref{eq:def-d_k}. Then
\[
		\mathcal{D}(\overline{h_{\nu,k}})=
		\begin{cases}
			H^1_0\left((0,+\infty);\mathbb{C}^2\right),&\text{if}\  \lambda_k^2-\nu^2 \neq \frac{1}{4};\\
			\left\{u\in L^2\left((0,+\infty);\mathbb{C}^2\right): u_1'-\frac{u_1}{r}\in L^2(0,+\infty), u_2\in H^1_0(0,+\infty)\right\}, &\text{if}\ \lambda_k^2-\nu^2 = \frac{1}{4}.
		\end{cases}
\]
\end{proposition}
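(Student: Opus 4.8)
The plan is to identify $\mathcal{D}(\overline{h_{\nu,k}})$ as the graph–closure of $C^\infty_c((0,+\infty);\mathbb{C}^2)$ under $d_{\nu,k}$, and to reduce the whole question to the local behaviour at the two endpoints $r=0$ and $r=+\infty$. At infinity the Coulomb term $1/r$ is subordinate and $h_{\nu,k}$ behaves like the free massless radial Dirac operator, which is in the limit–point case; hence every element of the maximal domain is automatically $H^1$ near infinity and carries no boundary data there. All the content of the statement is therefore concentrated at $r=0$. As a preliminary and universal step I would prove the inclusion $H^1_0((0,+\infty);\mathbb{C}^2)\subseteq\mathcal{D}(\overline{h_{\nu,k}})$, valid for every value of $\lambda_k^2-\nu^2$: the one–dimensional Hardy inequality $\int_0^{+\infty}\frac{|u|^2}{r^2}\,dr\le 4\int_0^{+\infty}|u'|^2\,dr$, valid for $u$ vanishing at the origin, shows that on $C^\infty_c$ the potential term $\tfrac1r u$ is dominated by $\|u'\|_{L^2}$, so the graph norm of $h_{\nu,k}$ is controlled by the $H^1$–norm; since $C^\infty_c$ is $H^1$–dense in $H^1_0$, each $u\in H^1_0$ is a graph limit of test functions and thus lies in $\mathcal{D}(\overline{h_{\nu,k}})$.

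For the reverse inclusion I would work with the maximal operator $h_{\nu,k}^*$, which acts as $d_{\nu,k}$ (see \eqref{eq:def-d_k}) on $\{u\in L^2:d_{\nu,k}u\in L^2\}$, together with the Green form produced by the integration by parts that makes $h_{\nu,k}$ symmetric, $[u,v](r)=u_1(r)\overline{v_2(r)}-u_2(r)\overline{v_1(r)}$, so that $\mathcal{D}(\overline{h_{\nu,k}})=\{u\in\mathcal{D}(h_{\nu,k}^*):[u,v](0^+)=0\ \text{for all }v\in\mathcal{D}(h_{\nu,k}^*)\}$. A Frobenius analysis of $d_{\nu,k}u=0$ at the origin yields the indicial exponents $\pm\gamma$ with $\gamma=\sqrt{\lambda_k^2-\nu^2}$, so every maximal–domain function has the local form $A\,r^{\gamma}c^{(+)}+B\,r^{-\gamma}c^{(-)}$ plus a remainder that is $H^1$ near $0$, obtained by variation of parameters against the $L^2$ datum $d_{\nu,k}u$.

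The case distinction then follows. If $\lambda_k^2-\nu^2>\tfrac14$ then $r^{-\gamma}\notin L^2$ and the surviving mode $r^{\gamma}$ already lies in $H^1$; the endpoint is limit–point, $h_{\nu,k}$ is essentially self–adjoint and its closure equals the maximal domain, which coincides with $H^1_0$. If $\lambda_k^2-\nu^2<\tfrac14$ (with $r^{\pm\gamma}$ replaced by $1,\log r$ when $\gamma=0$ and by $r^{\pm i|\gamma|}$ when $\gamma$ is imaginary) both homogeneous solutions are square–integrable; the boundary form becomes a nondegenerate pairing in the coefficients $(A,B)$, so $[u,v](0^+)=0$ for all $v$ forces $A=B=0$, leaving only the $H^1$ remainder, which vanishes at the origin. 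Together with the behaviour at infinity this gives $\mathcal{D}(\overline{h_{\nu,k}})=H^1_0$ again, settling every case with $\lambda_k^2-\nu^2\neq\tfrac14$.

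The main obstacle is the critical value $\lambda_k^2-\nu^2=\tfrac14$, i.e.\ $\gamma=\tfrac12$. Here $r^{-1/2}\notin L^2$, so the endpoint is still limit–point and $h_{\nu,k}$ remains essentially self–adjoint, whence $\mathcal{D}(\overline{h_{\nu,k}})$ coincides with the maximal domain; the point is that the surviving solution $r^{1/2}$ is no longer in $H^1$, which is precisely why the closure is strictly larger than $H^1_0$. To extract the explicit description I would diagonalise the leading singular term by the same constant matrix that maps $(r^{1/2},r^{-1/2})$ to the physical components (the matrix $P_{\nu,k}$ of \Cref{prop:description.extensions}), reducing the system to the two decoupled scalar problems $w_\pm'\mp\tfrac1{2r}w_\pm\in L^2$. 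Solving these by variation of parameters and translating the integrability of $w_\pm$ back to the components $u_1,u_2$ is the delicate bookkeeping of the proof: one must track which component retains the borderline $r^{1/2}$ behaviour and which is pushed into $H^1_0$, and this is what produces the space $\{u\in L^2:u_1'-\tfrac{u_1}{r}\in L^2,\ u_2\in H^1_0\}$. I expect this endpoint analysis at the borderline critical exponent to be the only genuinely delicate step.
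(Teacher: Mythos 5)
Your overall strategy differs from the paper's. The paper proves $H^1_0\subseteq\mathcal{D}(\overline{h_{\nu,k}})$ exactly as you do (one-dimensional Hardy inequality plus density), but for the precise description of the closure it does not run a limit-point/limit-circle analysis: it conjugates $d_{\nu,k}$ by the explicit constant matrices $M_1,M_2$ of \eqref{eq:diag-radial} into the purely off-diagonal operator with coupling $\tilde\lambda_k=\sqrt{\lambda_k^2-\nu^2}$, quotes \cite[Appendix A]{FL21} for the closure of that model operator, and treats the degenerate case $\lambda_k^2=\nu^2$ (where $M_1,M_2$ are singular) by a separate direct estimate. Your self-contained ODE route is viable in principle and even handles $\lambda_k^2=\nu^2$ uniformly, but one step is asserted rather than proved: that the variation-of-parameters remainder is $H^1$ near $0$. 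Against an $L^2$ datum, variation of parameters only gives a remainder $o(r^{1/2})$ (in the critical case only $O(r^{1/2}|\log r|^{1/2})$); upgrading ``$o(r^{1/2})$ and in the maximal domain'' to $u/r\in L^2$, hence $u'\in L^2$, requires the weighted Hardy estimates of the type in \cite[Proposition 2.4 and Equation (3.3)]{CP2018}. Without this ingredient your subcritical, supercritical and $\gamma>\tfrac12$ conclusions do not follow; this is the technical heart, not bookkeeping.

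The genuine gap is in the critical case. Your plan to translate the decoupled conditions $w_\pm'\mp\tfrac{1}{2r}w_\pm\in L^2$ back into conditions on $u_1,u_2$ and thereby ``produce'' the stated space cannot work, because the decoupling matrix genuinely mixes the two components. Concretely, at $\lambda_k^2-\nu^2=\tfrac14$ the function $u(r)=\chi(r)\,r^{1/2}\,(\lambda_k+\tfrac12,\;\nu)^T$, with $\chi$ as in \eqref{eq:def-chi}, is an exact zero mode of $d_{\nu,k}$ near the origin, hence belongs to the maximal domain, hence to $\mathcal{D}(\overline{h_{\nu,k}})$, since $h_{\nu,k}$ is essentially self-adjoint at criticality (\Cref{prop:description.extensions}\emph{(i)}); yet $u_2=\nu\chi r^{1/2}\notin H^1_0$ and $u_1'-u_1/r\notin L^2$ whenever $\nu\neq0$ (and $\nu\neq 0$ at criticality unless $\lambda_k=\tfrac12$). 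What your decoupling actually yields --- and what the paper's own argument yields as well, since the graph-norm equivalence transports the description of \cite{FL21} through $M_2^{-1}$ --- is that $v:=M_2u$ satisfies $v_1'-\tfrac{v_1}{2r}\in L^2$ and $v_2\in H^1_0$: the exceptional description holds in the $M_2$-transformed components, with coefficient $\tfrac1{2r}$ rather than $\tfrac1r$, and not componentwise in $u_1,u_2$. So at this point your sketch does not merely omit details: the target you aim for, read literally in the original components, is not the closure, and your proof would have to be redirected to the transformed statement to succeed.
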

\begin{proof}
	We denote by $C$ any positive constant.	
	Applying the one-dimensional Hardy inequality (see \cite[Proposition 2.4]{CP2018}):
	\[
	\int_0^\infty|f'(r)|^2\,dr\geq\int_0^\infty\frac{1}{4}\frac{|f(r)|^2}{r^2}\,dr\quad
	\text{for}\, f\in C^\infty_c(0,+\infty),
	\]
	we have that
	\[
	||h_{\nu,k}u||_{L^2}\leq C ||\partial_r u||_{L^2}, \quad\text{for}\ u\in\,\D(h_{\nu,k}).
	\]
	This implies that 
	\[
	H^1_0((0,+\infty);\mathbb{C}^2)\subset\mathcal{D}(\overline{h_{\nu,k}}).
	\]
	Set 
\[
	\tilde{\lambda}_k:=
	\begin{cases}
	\sqrt{\lambda _k^2-\nu ^2}&\text{if}\ \lambda_k^2-\nu^2\geq 0;\\
	\ii\sqrt{\nu^2-\lambda_k}&\text{otherwise},
	\end{cases}
\]
	and let $M_1,M_2$ be the matrices defined as 
\[
	M_1:=
	\begin{pmatrix}
	  \nu  & \lambda _k+\tilde{\lambda}_k \\
 \lambda _k+\tilde{\lambda}_k & \nu 
	\end{pmatrix}\quad
	M_2:=\begin{pmatrix}
	 -\nu  & \lambda _k+\tilde{\lambda}_k \\
 \lambda _k+\tilde{\lambda}_k & -\nu 
	\end{pmatrix}.
\]
	We get with an easy computation that
	\begin{equation}\label{eq:diag-radial}
	M_1\cdot
	\begin{pmatrix}
		\frac{\nu}{r} & -\partial_r-\frac{\lambda_k}{r} \\
		\partial_r -\frac{\lambda_k}{r} & \frac{\nu}{r}
	\end{pmatrix}=
	\begin{pmatrix}
	0&-\partial_r-\frac{\tilde{\lambda}_k}{r}\\
	\partial_r-\frac{\tilde{\lambda}_k}{r}&0
	\end{pmatrix}
	\cdot M_2.
	\end{equation}
	Let $\tilde{h}_{\nu,k}$ be the operator defined as
\[
	\D(\tilde{h}_{\nu,k})=C^\infty_c\left((0,+\infty);\C^2\right)\quad \tilde{h}_{\nu,k}u= \begin{pmatrix}
	0&-\partial_r-\frac{\tilde{\lambda}_k}{r}\\
	\partial_r-\frac{\tilde{\lambda}_k}{r}&0
	\end{pmatrix}\cdot 
	\begin{pmatrix}
	u_1\\
	u_2
	\end{pmatrix}.
\]
	
	For $\lambda_k^2-\nu^2\neq 0$, the matrices $M_1$ and $M_2$ are invertible. This means that the graph norm induced by $h_{\nu,k}$ is equivalent to the graph norm induced by $\tilde{h}_{\nu,k}$.
	We get the desired result applying the results of \cite[Appendix A]{FL21}. Although in this work the authors assume that $\tilde{\lambda}_k$ is real, the same approach can be used for purely imaginary constants. 

	Let us now assume $\lambda_k^2-\nu^2=0$. In this case, one can easily see that 
\[
	M_1\cdot M_2=M_2\cdot M_1=0\quad\text{and}\quad
	M_1+M_2=2\lambda_k\begin{pmatrix}
	0&1\\
	1&0
	\end{pmatrix}.
\]
	Then for any  $u\in\D(h_{\nu,k})$, thanks to \eqref{eq:diag-radial} and by the one-dimensional Hardy-inequality we have that:
	\[
	\begin{split}
	||h_{\nu,k} u||_{L^2}^2&=\frac{1}{4\lambda_k^2}||M_1 h_{\nu,k}||_{L^2}^2+\frac{1}{4\lambda_k^2}||M_2 h_{\nu,k}||_{L^2}^2
	\geq \frac{1}{4\lambda_k^2} ||\tilde{h}_{\nu,k}M_2 u||_{L^2}^2=\frac{1}{4\lambda_k^2}||\partial_r(M_2 u)||_{L^2}^2\\
	&
	\geq C \left\Vert \frac{M_2}{r} u \right\Vert_{L^2}^2.
	\end{split}
	\]		
	Thanks to this, and by definition of $h_{\nu,k}$ we have that
	\[
	||\partial_r u||_{L^2}=\left\Vert \begin{pmatrix}
0&1\\
-1&0
\end{pmatrix}\cdot
\left(
h_{\nu,k}+\frac{M_2}{r}\right)u\right\Vert_{L^2}
\leq C ||h_{\nu,k}u||_{L^2}.
	\]
%	 be a Cauchy sequence in the graph norm induced by $h_{\nu,k}$, that is both $\{u_n\}_{n\in\N}$ and $\{h_{\nu,k}u_n\}_{n\in\N}$ are Cauchy sequences in the $L^2$-norm. By construction, for $i=1,2$ $\{M_i u_n\}_{n\in\N}$ and $\{M_i h_{\nu,k}u_n\}_{n\in\N}$ are Cauchy sequences in the $L^2$-norm.
%Thanks to this fact and by \eqref{eq:diag-radial} we have that $\{M_2 u_n\}_{n\in\N}$ is a Cauchy sequence in the graph norm induced by $\tilde{h}_{\nu,k}$ and thus in the $H^1$-norm thanks to the results of \cite[Appendix A]{FL21}. Applying the one-dimensional Hardy inequality (see \cite[Proposition 2.4]{CP2018}), we have that
%\[
%	\int_0^\infty|u'(r)|^2\,dr\geq\frac{1}{4}\frac{|u(r)|^2}{r^2}\,dr\quad
%	\text{for}\, u\in C^\infty_c(0,+\infty),
%\]
%Thus, $\left\{\frac{M_2}{r} u_n\right\}_{n\in\N}$ is a Cauchy sequence in the $L^2$-norm.
%
%and so $u_n$ is a Cauchy sequence in the $H^1$-norm,
This proves that 
\[
\mathcal{D}(\overline{h_{\nu,k}})\subset H^1_0((0,+\infty);\mathbb{C}^2),
\]
and it concludes the proof.
\end{proof}

\begin{proof}[Proof of \Cref{thm:self-ajdoint-realisations-rad}]
	Let us denote by $h_{\nu,k}^*$ the adjoint operator of $h_{\nu,k}$. Then, by definition we have that
\[
	\begin{split}
	& \D(h_{\nu,k}^*):= \left\{u\in L^2\left((0,+\infty);\C^2\right):d_{\nu,k}u\in L^2\left((0,+\infty);\C^2\right)\right\}\\
	& h_{\nu,k}^* u:=d_{\nu,k} u,
	\end{split}
\]
	where $d_{\nu,k} u$ has to be read in the distributional sense.

	From the analysis of \cite{CP2018} it turns out that $\delta:=\lambda_k^2-\nu^2$ is the parameter ruling essential self-adjointness of $h_{\nu,k}$. Indeed, from \cite[Theorem 1.1]{CP2018} we know that $h_{\nu,k}$ is essentially self-adjoint if $\delta \geq \tfrac{1}{4}$. This consideration, together with the explicit characterisation of the closure of \Cref{prop:closure} yields precisely \emph{\ref{item:def-self-adjoint-realisation-radial-ottimo>1/4}}.
	
	Let us now assume $0<\delta<\frac{1}{4}$. By  \cite[Theorem 1.2 \emph{(i)}]{CP2018} the operator $h_{\nu,k}$ is not essentially self-adjoint and it admits a one-parameter family of self-adjoint extensions $\left\{t^{(\alpha)}_{
	\nu,k}\right\}_{\alpha\in[0,\pi)}$. Moreover $u\in\mathcal{D}(t^{(\alpha)}_{	\nu,k})$ if and only if $u\in \mathcal{D}(h_{\nu,k}^*)$ and there exists $(A^+,A^-)\in\C^2$ such that
		\begin{gather}\label{eq:comb-lin-sin-cos}
		A^+\sin(\alpha)+A^-\cos(\alpha)=0,\\
		\label{eq:as-domain}
		u(r)=P_{\nu,k} 
		\begin{pmatrix}
			A^+ r^{\sqrt{\delta}} \\ 
			A^- r^{-\sqrt{\delta}}
		\end{pmatrix} 
		+o(r^{1/2})\quad\text{for}\ r\to 0.
	\end{gather}
	Let us decompose
\[
		u(r) = P_{\nu,k} \begin{pmatrix}
			A^+ r^{\sqrt\delta} \\
			A^- r^{-\sqrt\delta}
		\end{pmatrix} \chi(r) + \left[ u(r)-P_{\nu,k} \begin{pmatrix}
			A^+ r^{\sqrt\delta} \\
			A^- r^{-\sqrt\delta}
		\end{pmatrix} \chi(r) \right] \,=:v(r)+w(r).
\]
	Thanks to \eqref{eq:comb-lin-sin-cos}, we have that $v\in\operatorname{span}\left(u_{\nu,k}^{(\alpha)}\right)$. 
	
	Let us focus on $u_2$. By definition, $w\in \D(h_{\nu,k}^*)$ and. Moreover, by \eqref{eq:as-domain} $w(r)=o(r^{1/2})$ as $r \to 0$. Thanks to this and applying \cite[Equation (3.3)]{CP2018} we have that 
\[
		\int_0^\infty\frac{|w(r)|^2}{r^2}\,dr<+\infty.
\]
	For this reason
	\[
	\begin{pmatrix}
	0 & -\partial_r\\
	\partial_r & 0
	\end{pmatrix}w=
	h_{\nu,k}^*w-
	\begin{pmatrix}
		\nu & -\lambda_k \\
		-\lambda_k& \nu
	\end{pmatrix} \,\frac{w}{r} \in L^2\left((0,+\infty);\C^2\right),
	\]
	that proves that $w\in H^1_0\left((0,+\infty);\C^2\right)$. This implies that
\[
    		\D({t^{(\alpha)}_{\nu,k}})\subset\operatorname{span}\big\{u_{\nu,k}^{(\alpha)}\big\}+ H_0^1((0,+\infty);\C^2).
\]
    The other inclusion is obvious and this concludes the proof of \emph{\ref{item:def-self-adjoint-realisation-radial-sottocritico}}.

Last two points are proved analogously, but they rely on \cite[Theorem 1.2 \emph{(ii)} and Theorem 1.3]{CP2018} respectively. 	
\end{proof}

We are now ready to prove \Cref{thm:self-ajdoint-realisations} and \Cref{thm:distinghuished-self-ajdoint}.
\begin{proof}[Proof of \Cref{thm:self-ajdoint-realisations}]
Looking for self-adjoint extensions of $H_{\mathrm{min}}$ is equivalent to looking for self-adjoint extensions of the massless problem (as their difference is a bounded operator). The latter can be conveniently expressed in the unitary equivalent form of a direct sum (Proposition \ref{prop:radial.decomposition}) which can be exploited for the computation of deficiency indices (see \cite[Section 1.6]{GM-book}). From Proposition \ref{prop:description.extensions} we know that the deficiency indices of $h_{\nu,k}$ are
\[
	\dim \ker(h_{\nu,k}^*\pm i) \;=\; \begin{cases}
		0 \qquad \text{if} \qquad \lambda_k^2-\nu^2 \geq \frac{1}{4}, \\
		1 \qquad \text{if} \qquad \lambda_k^2-\nu^2 < \frac{1}{4}.
	\end{cases}
\]
It follows immediately that if $\lambda_0^2-\nu^2=\frac{\pi^2}{4 \omega^2}-\nu^2 \geq \frac{1}{4}$ then $\dim \ker (h_{\nu,k}^* \pm i) = 0$ for all $k \in \mathbb{N}$ and thus, by the basic criterion of essential self-adjointness \cite[Corollary to Theorem VIII.3]{reedsimon2}, $H_{\mathrm{min}}$ is essentially self-adjoint.

Using the characterisation of \Cref{prop:closure} one completes the proof of \emph{\ref{item:self-adjointness}}. For any $k \in \mathbb{N}$ such that $\lambda_k^2-\nu^2 < \frac{1}{4}$, the operator $h_{\nu,k}$ is not essentially self-adjoint. Since each non essentially self-adjoint radial operator contributes increasing the definciency indices of $1$, deficiency index of the total operator will be equal to the number of non-self-adjoint radial operators. For fixed $\nu$ and $\omega$, the condition on $k$ for $h_{\nu,k}$ being non essentially self-adjoint is
\[
	k < \frac{\omega}{\pi} \sqrt{\nu^2+\frac{1}{4}} - \frac{1}{2} \, .
\]
Calling $d$ the maximum of such $k$, deficiency indices of $H_{\mathrm{min}}$ are $d+1$ and then, from von Neumann's theorem of self-adjoint extensions \cite[Theorem X.2]{reedsimon2} we get the thesis.
%Let $d$ be defined as in \eqref{eq:def-d}. If $d=0$, then for any $k\in\mathbb{N}$ the operator $h_{\nu,k}$ is essentially self-adjoint and so
% $\Hmin$ is essentially self-adjoint by the basic criterion of self-adjointness, see \cite[Corollary to Theorem VIII.3]{reedsimon2}.
%
%Assume now $d>1$. In this case, for any $k\leq d$, the operator $d_{\nu,k}$ is not essentially self-adjoint and it admits a one parameter family of self-adjoint extensions. Thanks to the von Neumann theorem \cite[Theorem X.2]{reedsimon2}, the operator $d_{\nu,k}$ has deficiency indices $(1,1)$.
%Thus the operator $\Hmin$ has deficiency indices $(d+1,d+1)$: we conclude the proof thanks again to \cite[Theorem X.2]{reedsimon2}.
\end{proof}

The proof of \Cref{thm:distinghuished-self-ajdoint} follows from \Cref{prop:radial.decomposition}, \Cref{prop:description.extensions} and the analysis of analogous properties on the reduced operators $h_{\nu,k}$. 
We study them in the following proposition.
\begin{proposition}%[Distinguished self-ajdoint extension]
\label{thm:distinghuished-self-ajdoint-rad}
	Let $h_{\nu,k}$ and be defined as in \eqref{eq:def-h_k} and assume that $\lambda_k^2-\nu^2<1/4$. 
	Let $\{t^{(\alpha)}_{\nu,k})\}_{\alpha\in [0,\pi)}$ be the family of all the self-ajdoint extensions of $h_{\nu,k}$ defined respectively as in \Cref{thm:self-ajdoint-realisations-rad} \ref{item:def-self-adjoint-realisation-radial-sottocritico}--\ref{item:def-self-adjoint-realisation-radial-sopracritico}.
	Then:
	\begin{enumerate}[label=(\roman*)]
		\item\label{item:choose.distinguished} if $0 <\lambda_k^2-\nu^2$, $h_{\nu,k}$ admits a unique self-adjoint extension $t^{(D)}_{\nu,k}$ that verifies the property
		\begin{equation}\label{eq:condition.radial}
			\D(t^{(D)}_{\nu,k})\subset
			\mathcal{D}(r^{-a}\chi_{\{r\leq 1\}}),\quad \text{for all }  0\leq a <\frac{1}{2}+\sqrt{\lambda_k^2-\nu^2};
		\end{equation}	
		\item\label{item:choose.distinguished.critical} if $0 = \lambda_k^2-\nu^2 $, $h_{\nu,k}$ admits a unique self-adjoint extension $t^{(D)}_{\nu,k}$ that verifies the property
		\begin{equation}\label{eq:condition.critical.radial}
			\D(t^{(D)}_{\nu,k})\subset
			\mathcal{D}((r^{a}\log^2 r)^{-1} \chi_{\{r\leq 1\}}),\quad \text{for all }  0\leq a \leq \frac{1}{2};
		\end{equation}	
 		\item\label{item:supercritical} $\lambda_k^2-\nu^2<0$, all the self-adjoint extension $t^{(\alpha)}_{\nu,k}$ of $h_{\nu,k}$ verify
		\begin{equation}\label{eq:condition.supercritical.radial}
			\D(t^{(\alpha)}_{\nu,k})\subset
						\mathcal{D}((r^{a}\log^2 r)^{-1} \chi_{\{r\leq 1\}}),\quad \text{for all }  0\leq a \leq \frac{1}{2}.
		\end{equation}		
	\end{enumerate}
\end{proposition}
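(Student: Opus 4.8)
The plan is to reduce each assertion to the behaviour of the explicit singular profiles of \Cref{prop:description.extensions} as $r\to0$. In every regime that proposition gives $\D(t^{(\alpha)}_{\nu,k})=\operatorname{span}\{u_{\nu,k}^{(\alpha)}\}+H^1_0((0,+\infty);\C^2)$, and since the cutoff $\chi$ equals $1$ near the origin, membership in the weighted spaces of \eqref{eq:condition.radial}, \eqref{eq:condition.critical.radial} and \eqref{eq:condition.supercritical.radial} is governed by the $r\to0$ asymptotics. The first step is to discard the regular summand: for $w\in H^1_0((0,+\infty);\C^2)$ the one-dimensional Hardy inequality \cite[Proposition 2.4]{CP2018} gives $w/r\in L^2$, so near the origin $\int_0^1 r^{-2a}|w|^2\,dr=\int_0^1 r^{2(1-a)}\,\frac{|w|^2}{r^2}\,dr\le\int_0^1\frac{|w|^2}{r^2}\,dr<+\infty$ for every $a\le1$, and the logarithmic factors only improve this convergence at $r=0$. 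Hence only the profile $u_{\nu,k}^{(\alpha)}$ can obstruct the inclusions, and the task becomes to decide for which $\alpha$ it belongs to the weighted space for the whole admissible range of $a$.

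For item \ref{item:choose.distinguished} put $s:=\sqrt{\lambda_k^2-\nu^2}\in(0,\tfrac12)$. Here $u_{\nu,k}^{(\alpha)}$ is the superposition $\cos\alpha\,r^{s}\,P_{\nu,k}e_1+\sin\alpha\,r^{-s}\,P_{\nu,k}e_2$ (times $\chi$), and the invertibility of $P_{\nu,k}$ makes the two vector coefficients nonzero and linearly independent. A one-line computation shows $r^{-a}r^{\pm s}\in L^2$ near $0$ if and only if $a<\tfrac12\pm s$; thus the $r^{s}$-term is admissible for all $a<\tfrac12+s$, while the $r^{-s}$-term already fails once $a\ge\tfrac12-s$. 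Since $r^{s}$ and $r^{-s}$ are linearly independent no cancellation is possible, so \eqref{eq:condition.radial} forces $\sin\alpha=0$ and selects the unique extension $t^{(D)}_{\nu,k}=t^{(0)}_{\nu,k}$.

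For item \ref{item:choose.distinguished.critical} we have $u_{\nu,k}^{(\alpha)}=(\mathbbm{1}_2+Q_{\nu,k}\log r)\,v_\alpha\,\chi$ with $v_\alpha=(\cos\alpha,\sin\alpha)^{\top}$; since $\lambda_k>0$ the matrix $Q_{\nu,k}$ satisfies $Q_{\nu,k}^2=\mathbbm{0}_{2\times2}$ and $\operatorname{rank}Q_{\nu,k}=1$. Consequently $u_{\nu,k}^{(\alpha)}$ stays bounded as $r\to0$ precisely when $Q_{\nu,k}v_\alpha=0$, and otherwise its leading term is $\log r\cdot Q_{\nu,k}v_\alpha$. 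Testing \eqref{eq:condition.critical.radial} at the borderline $a=\tfrac12$, a bounded profile produces a convergent integral of the type $\int_0\frac{dr}{r(\log r)^2}$, whereas a $\log r$-profile produces the divergent $\int_0\frac{dr}{r}$. As $\ker Q_{\nu,k}$ is one-dimensional there is exactly one $\alpha$ with $Q_{\nu,k}v_\alpha=0$: this is the unique extension meeting \eqref{eq:condition.critical.radial}, hence the distinguished one.

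For item \ref{item:supercritical} the two fundamental behaviours are $r^{\pm\ii\sqrt{\nu^2-\lambda_k^2}}$, both of modulus $1$, so $|u_{\nu,k}^{(\alpha)}(r)|$ is bounded as $r\to0$ for every $\alpha$. The weighted integral in \eqref{eq:condition.supercritical.radial} then converges for all $a\le\tfrac12$ and all $\alpha$, so every self-adjoint extension satisfies the property and none can be singled out. I expect the main obstacle to be the analysis at the borderline exponent $a=\tfrac12$ with the logarithmic weights: one has to pin down the exact convergence thresholds of $\int_0 r^{-2a}(\log r)^{-\beta}\,dr$, so that the logarithmically divergent solution just fails in the critical case while the bounded and oscillatory solutions just pass; a subordinate point, already used above, is that the invertibility of $P_{\nu,k}$ and $R_{\nu,k}$ together with the linear independence of the two fundamental solutions rules out any accidental cancellation of the dangerous term.
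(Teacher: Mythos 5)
Your overall strategy is exactly the ``explicit computation'' that the paper's (very terse) proof invokes: split each domain as $\operatorname{span}\{u^{(\alpha)}_{\nu,k}\}+H^1_0$, dispose of the $H^1_0$ summand via the one-dimensional Hardy inequality, and decide membership in the weighted spaces from the $r\to0$ asymptotics of the explicit profile, using the invertibility of $P_{\nu,k}$ (resp.\ $R_{\nu,k}$) to exclude cancellations. Your treatment of cases \ref{item:choose.distinguished} and \ref{item:supercritical} is correct and agrees with the paper's conclusions ($\alpha=0$ is the unique admissible parameter in the subcritical case; every $\alpha$ passes in the supercritical case).

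Your case \ref{item:choose.distinguished.critical}, however, does not compute with the weight as it is written in the statement. The weight is $(r^{a}\log^{2}r)^{-1}$, so the $L^2$ condition at $a=\tfrac12$ reads $\int_0 |u|^2\,r^{-1}(\log r)^{-4}\,dr$. For the logarithmic profile, $|u|\sim|\log r|\,|Q_{\nu,k}v_\alpha|$, this is $\int_0 \frac{dr}{r\log^2 r}$, which \emph{converges} --- not the divergent $\int_0\frac{dr}{r}$ you claim. The integrals you wrote (namely $\int_0\frac{dr}{r\log^2 r}$ for the bounded profile and $\int_0\frac{dr}{r}$ for the logarithmic one) are what one gets with a \emph{single} power of the logarithm, i.e.\ with the weight $(r^{a}|\log r|)^{-1}$. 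This matters: with the weight as printed, every profile, bounded or logarithmic, satisfies \eqref{eq:condition.critical.radial} for all $0\le a\le\tfrac12$, so the condition selects nothing and the claimed uniqueness cannot be established by this (or any) argument --- as literally stated, item \ref{item:choose.distinguished.critical} appears false, which strongly suggests the exponent on the logarithm in the paper's statement is a typo and the intended weight is $(r^{a}|\log r|)^{-1}$, for which your computation is exactly right. (A further sign that the printed weight is not meant literally: it blows up at $r=1$, where $\log r=0$; like you, the paper clearly reads the condition as a constraint near the origin only.) A secondary discrepancy: your selection criterion $Q_{\nu,k}v_\alpha=0$ gives $\tan\alpha=\operatorname{sign}(\nu)$, i.e.\ $\alpha=(\operatorname{sign}\nu)\pi/4$ mod $\pi$, whereas the paper's proof asserts $\alpha=-(\operatorname{sign}\nu)\pi/4$; with the parameterization of $u^{(\alpha)}_{\nu,k}$ exactly as written in \Cref{prop:description.extensions}, your computation is the internally consistent one, and since the proposition asserts only existence and uniqueness (not a value of $\alpha$), this sign does not affect the validity of your argument.
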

\begin{proof}
Thanks to \Cref{prop:description.extensions}, with explicit computation we have that condition \eqref{eq:condition.radial} 
is verified for $t_k^{(\alpha)}$  if and only if $\alpha = 0$. Analogously condition \eqref{eq:condition.critical.radial}  is verified if and only if $\alpha = -(\text{sign } \nu) \pi/4$. Finally, \eqref{eq:condition.supercritical.radial} is verified for any $\alpha\in[0,\pi)$.
\end{proof}

\begin{proof}[Proof of \Cref{thm:distinghuished-self-ajdoint}]
Let us firstly assume $\frac{\pi^2 - \omega^2}{4\omega^2}< \nu^2< \frac{\pi^2}{4\omega^2}$. 
This implies that $d$ defined in \eqref{eq:def-d} is equal to $0$. Thus, by \Cref{thm:self-ajdoint-realisations}, the operator $\Hmin$ has infinitely many self-adjoint extensions and there exists a one-to-one correspondence between the self-ajdoint extensions of $\Hmin$ and the space  $\mathcal{U}(1)\sim[0,\pi)$.
Since $0 \leq \lambda_0 - \nu^2 <1/4 $, and $\lambda_k^2 - \nu^2 \geq 1/4$ for all $k \in \N\setminus \{0\}$, thanks to \Cref{thm:self-ajdoint-realisations-rad} we have that for any $\alpha
\in[0,\pi)$:
\[
T^{(\alpha)} \cong t_0^{(\alpha)} \oplus \bigoplus_{k \in \N \setminus \{0\}} \overline{h_{\nu,k}}.
\]
From \Cref{thm:distinghuished-self-ajdoint-rad} \emph{\ref{item:choose.distinguished}}, we have that the self-adjoint realisation defined through the unitary map in \eqref{prop:UnitaryTransformation} as
\[
T^{(D)} :\cong t_0^{(D)} \oplus \bigoplus_{k \in \N \setminus \{0\}} \overline{h_{\nu,k}}
\]
is the unique one whose domain in included in $\mathcal{D}(|x|^{-a},B_1)$.
Last two points are proved analogously, but they rely on \Cref{thm:distinghuished-self-ajdoint-rad} \emph{\ref{item:choose.distinguished.critical}},  \emph{\ref{item:supercritical}} respectively. 	
\end{proof}

\section{Spectral Properties, Hardy-Dirac and proof of \texorpdfstring{\Cref{thm:ess-spect}}{Theorem 1.11}}
\begin{proof}[Proof of \Cref{thm:hardy-on-sector}]
%Let $\psi\in \D(H_\omega)$ and let us firstly assume that $\psi\in C^\infty_c\left(\overline{S_\omega}\setminus\{0\};\C^2\right)$.
Let $\psi\in C^\infty_c\left(\overline{S_\omega}\setminus\{0\};\C^2\right) \subset \mathcal{D}(H_\omega)$.
Thanks to \Cref{prop:UnitaryTransformation} for $\nu = 0$, there exist $u_k^+, u_k^- \in C^\infty_c\left ((0,+\infty);\C^2\right)$, $k \in \N$, such that 
\eqref{eq:rappr.psi.radial} holds true. 
Thanks to \eqref{eq:compute.HV} and \eqref{eq:compute.HV.2},  we explicitly compute
	\begin{equation}\label{eq:dec-sigma-grad-radial}
	\begin{split}
		\int_{S_\omega} |\boldsymbol\sigma\cdot \nabla\psi|^2\,dx
			&=
			\sum_{k\in \N} \Bigg[
			\int_0^\infty \left|\left(\partial_r+\tfrac{\lambda_k}{r}\right)u_k^-(r)\right|^2\,dr+
			\int_0^\infty \left|\left(\partial_r-\tfrac{\lambda_k}{r}\right)u_k^+(r)\right|^2\,dr \Bigg]
			\\
			&=
			\sum_{k\in \N} \Bigg[
			\int_0^\infty \left|\partial_r\left(r^{\lambda_k}u_k^-(r)\right)\right|^2 r^{-2\lambda_k}\,dr+
			\int_0^\infty\left|\partial_r\left(r^{-\lambda_k}u_k^+(r)\right)\right|^2 r^{2\lambda_k}\,dr \Bigg].
	\end{split}
	\end{equation}
Thanks to \cite[Proposition 2.4 $(i)$, $(ii)$]{CP2018} , and using that $u_k^\pm(0)=u_k^\pm(\infty)=0$, we have that for any $k\in \N$
	\begin{equation}\label{eq:apply-hardy-lambda}
	\begin{gathered}
		\int_0^\infty \left|\partial_r\left(r^{-\lambda_k}u_k^+(r)\right)\right|^2 r^{2\lambda_k}\,dr\geq 
			\left(\lambda_k-\tfrac{1}{2}\right)^2
			\int_0^\infty \frac{|u_k^+(r)|^2}{r^2}\,dr,\\
		\int_0^\infty\left|\partial_r\left(r^{\lambda_k}u_k^-(r)\right)\right|^2 r^{-2\lambda_k}\,dr\geq
			\left(\lambda_k+\tfrac{1}{2}\right)^2
			\int_0^\infty \frac{|u_k^-(r)|^2}{r^2}\,dr.
	\end{gathered}	
	\end{equation}
Since $\min_{k\in\N} \left(\lambda_k\pm\tfrac{1}{2}\right)^2 = \tfrac{(\pi-\omega)^2}{4\omega^2}$, combining  \eqref{eq:dec-sigma-grad-radial} with \eqref{eq:apply-hardy-lambda} we have that:
%	\begin{equation}
%	\begin{split}
%	\int_{S_\omega} |\sigma\cdot \nabla\psi|^2\,dx
%		&\geq 
%		\frac{(\pi-\omega)^2}{4}
%		\sum_{k\in \N}
%		\int_0^\infty \frac{|u_k^+(r)|^2}{r^2}\,dr+
%		\int_0^\infty \frac{|u_k^-(r)|^2}{r^2}\,dr=\int_{S_\omega} \frac{|\psi|^2}{|x|^2}\,dx.
%	\end{split}	
%	\end{equation}
\[
	\begin{split}
	\int_{S_\omega} |\boldsymbol\sigma\cdot \nabla\psi|^2\,dx
		&\geq 
		\frac{(\pi-\omega)^2}{4\omega^2}
		\sum_{k\in \N} \Bigg[
		\int_0^\infty \frac{|u_k^+(r)|^2}{r^2}\,dr+
		\int_0^\infty \frac{|u_k^-(r)|^2}{r^2}\,dr\Bigg] \\
		& =
		\frac{(\pi-\omega)^2}{4\omega^2}
		\int_{S_\omega} \frac{|\psi|^2}{|x|^2}\,dx.
	\end{split}	
\]
	Finally, thanks to the fact that $\psi$ is supported far away from the origin, and that the curvature of a straight line is null, applying \cite[Equation (2.5)]{PVDB2021} we have that  
	\[
		\int_{S_\omega} |\boldsymbol\sigma\cdot \nabla\psi|^2\,dx=
		\int_{S_\omega} |\nabla\psi|^2\,dx.
	\]	
	This implies that the $H_\omega$-norm is equivalent to the $H^1$-norm on $S_\omega$ and so, by a density argument, we conclude the proof.
%	we can extend this result to any function in $\D(H_\omega)$.
\end{proof}

From now on we denote by $H_\omega^{(D)}$ the self-adjoint operator $H_\omega$ defined in \eqref{eq:def-free-inf-mass} when $0<\omega\leq \pi$, and its distinguished self-adjoint extension (the unique one whose domain is included in the Sobolev space $H^{1/2}$) when $\pi<\omega\leq 2\pi$.
We recall a known result, namely \cite[ Proposition 1.12]{LTOB2018} that states that
\begin{equation}\label{eq:ess-spectrum-inf-mass}
		\sigma_{\mathrm{ess}}(H_\omega^{(D)})=(-\infty,-m]\cup[m,+\infty).
\end{equation}

We are ready now to prove \Cref{thm:ess-spect}.

\begin{proof}[Proof of \Cref{thm:ess-spect}]
We can always assume that $T$ verifying the property $\D(T)\subset H^s(S_\omega;\C^2)$ for $s<1/2$, see \Cref{rem:sobolev-regularity}. Indeed, if $\tilde{T}$ is an extension that does not verify this property, then $\tilde{T}$ is a finite rank perturbation of $T$  in the sense of resolvent differences and so they have the same essential spectrum.
We use the strategy of \cite[Section 4.3.4]{thaller},
exploiting the Weyl theorem \cite[Theorem XIII.14]{reedsimon4}: we prove that for any $z\in \C\setminus \R$ the operator $(T-z)^{-1}-(H_\omega^{(D)}-z)^{-1}$ is compact. Consequently, from \eqref{eq:ess-spectrum-inf-mass} we have that
	\[
		\sigma_{\mathrm{ess}}(T)=\sigma_{\mathrm{ess}}(H_\omega^{(D)})=(-\infty,-m]\cup[m,+\infty).
	\] 	

	For this purpose, let $\chi$ be defined as in \eqref{eq:def-chi} and, for any $n\in\N$, set $\chi_n(x):=\chi(|x|/n)$ and $\zeta_n(x):=1-\chi_n(x)$.
	Then
	\[
	\begin{split}
		(T-z)^{-1}-(H_\omega^{(D)}-z)^{-1}
			=&\,(T-z)^{-1}\chi_n-(H_\omega^{(D)}-z)^{-1}\chi_n\\
			 &+(T-z)^{-1}\zeta_n - \zeta_n(H_\omega^{(D)}-z)^{-1}\\
			 &+\zeta_n(H_\omega^{(D)}-z)^{-1}-(H_\omega^{(D)}-z)^{-1}\zeta_n\\
			=&: S^{(1)}_n+S^{(2)}_n+S^{(3)}_n.
	\end{split}
	\]
	
	We claim that $S^{(1)}_n=\left(\chi_n(T-\overline{z})^{-1}-\chi_n(H_\omega^{(D)}-\overline{z})^{-1}\right)^*$ is compact. Indeed, both operators $\chi_n(T-\overline{z})^{-1}$ and $\chi_n(H_\omega^{(D)}-\overline{z})^{-1}$ are compact since they are both bounded from $L^2(S_\omega;\C^2)$ to $H^{s}(S_\omega\cap B_{2n};\C^2)$ (being $B_{2n}$ the ball of radius $2n$) and $H^{s}(S_\omega\cap B_{2n};\C^2)$ is compactly embedded in $L^2(S_\omega;\C^2)$. 
	
	Let us analyse $S^{(2)}_n$. We observe that $V_n:=\frac{\nu}{|x|}\zeta_n\in L^\infty$, and so
		\[
	\begin{split}
		S^{(2)}_n=&
					-(T-z)^{-1}\left(\zeta_n(H_\omega-z)-(T-z)\zeta_n\right)(H_\omega-z)^{-1}\\
			=&-(T-z)^{-1}(V_n)(H_\omega-z)^{-1}\\
			&-(T-z)^{-1}(-i\boldsymbol\sigma\cdot\nabla\zeta_n)(H_\omega^{(D)}-z)^{-1}.
	\end{split}
	\]
	Using the fact that $||V_n||_{\infty}\leq \frac{C}{n}$ and $||-i\sigma\cdot\nabla\zeta_n||_{\infty}\leq \frac{C}{n}$, we can conclude that $S^{(2)}_n\to 0$ in the operator norm for $n\to+\infty$.
	
	Let us finally estimate $S^{(3)}_n$. Reasoning as above we have:
	\[
	 		S^{(3)}_n=
			-(H_\omega^{(D)}-z)^{-1}(-i\boldsymbol\sigma\cdot\nabla\zeta_n)(H_\omega^{(D)}-z)^{-1}\to 0\quad\text{in the operator norm for}\ n\to+\infty.	
	\]
	This concludes the first part of the proof.
Having proved that $\sigma_{\mathrm{ess}}(T) = (-\infty,-m] \cup [m,+\infty)$ one has immediately that $\sigma_{\text{d}}(T) \subset (-m,m)$. Recalling that $\sigma_{\text{d}}(T)$, the discrete spectrum of $T$, is the set of isolated eingevalues with finite multiplicity we note the following. Pick up $\lambda \in (-m,m)$, then there are two possibilities: either $\lambda$ is an eigenvalue of $T$ (which means that the eigenvalue  or there exists a neighbourhood of $\lambda$ that is contained in the resolvent set $\rho(T)$, that is $\lambda \in \rho(T)$. If this is the case and the operator $H_{\text{min}}$ is not essentially self-adjoint, since the dimension of deficiency subspaces is constant in the resolvent set, $\dim \ker(H^*_{\mathrm{min}}-\lambda) \geq 1$. Using Kre\u{\i}n-Vi\v{s}ik-Birman-Grubb extension scheme (see \cite[Theorem 2.13]{GM-book}) one can consider the self-adjoint extension $H_{\mathbbm{0}}-\lambda$ of $H_{\mathrm{min}}-\lambda$ corresponding to the Birman parameter $\mathbb{O}$ that has domain $\mathcal{D}(H_{\mathbb{O}}) \;=\; \mathcal{D}(H_{\text{min}})\dot{+} \ker ((H_{\text{min}})^*-\lambda)$. Shifting the operator by $\lambda$ doesn't change its domain and thus $\mathcal{D}(H_{\mathbb{O}})$ is a domain of a self-adjoint extension of $H_{\text{min}}$ with eigenvalue $\lambda$.
\end{proof}

\appendix
\section{Properties of the angular operator}
\label{sec:appendix-angular}
In this appendix we 
decompose the Hilbert space $L^2(S_\omega ; \C^2)$ in the direct sum of \emph{partial wave subspaces},
namely invariant subspaces for the action of the Dirac operator with a potential having
spherical symmetry. The topic is very well known so we give here a light presentation: for complete details the reader can see e.g.~\cite{LTOB2018,PVDB2021,CL20,FL21} or to \cite[Section 4.6]{thaller} for the analogous three dimensional reduction.

We use the standard notation for polar coordinates: for $x=(x_1,x_2)\in\R^2\setminus\{0\}$, 
\[
	\begin{array}{l}
		x_1=r\cos\theta,\\
		x_2= r\sin\theta,
	\end{array}
	\quad\text{being}\quad
	\begin{array}{l}
		r:=\sqrt{x_1^2+x_2^2}\in (0,+\infty),\\
		\theta:=\operatorname{sign}(x_2)\arccos\left(x_1/r \right)\in[0,2\pi).
	\end{array}
\]
For all $\psi \in L^2(S_\omega; \C)$, let $\varphi  = \varphi(r,\theta) : (0,+\infty) \times (0,\omega) \to \C$ be defined as follows
\[
	\varphi(r,\theta):= \sqrt{(x_1(r,\theta))^2 + (x_2(r,\theta))^2} \, \psi \left(x_1(r,\theta),x_2(r,\theta)\right)
	\quad \text{ for all } r \in (0,+\infty), \theta \in (0,\omega).
\]
The map $\psi \mapsto \varphi$ is a unitary map $L^2(S_\omega; \C) \to L^2((0,+\infty);\C)\otimes L^2((0,\omega);\C)$, since
\[
	\int_{S_\omega} |\psi(x)|^2\,\ud x\,=\,
		\int_0^\omega \int_0^{+\infty} |\varphi(r,\theta)|^2 \,\ud r\,\ud\theta.
\]
Repeating this reasoning for every component of the wave-function, we obtain the decomposition:
\begin{equation}\label{eq:equivL2}
	L^2(S_\omega;\C^2)\,\cong\,
	L^2((0,+\infty),\ud r)\otimes L^2((0,\omega);\C^2),
\end{equation}
where `` $\cong$ ''means unitarly equivalent. 

It is useful to express the Dirac operators in polar coordinates: setting
\[
	\boldsymbol{e_r}:=(\cos\theta,\sin\theta)=\frac{x}{r},\quad
	\boldsymbol{e_\theta}:=(-\sin\theta,\cos\theta)=\frac{\partial \boldsymbol{e_r}}{\partial\theta},
\]
we abbreviate  
\[
	\partial_r=\boldsymbol{e_r}\cdot{\nabla}
	\quad\text{and}\quad
	\partial_\theta=\boldsymbol{e_\theta}\cdot{\nabla}.
\]
By means of elementary computations, it is easy to see that
\[
	\boldsymbol{\sigma} \cdot \boldsymbol{e_r} \;=\; \begin{pmatrix}
		0 & e^{-\ii \theta} \\
		e^{\ii \theta} & 0
	\end{pmatrix}.
\]
and that the identity $\boldsymbol{\sigma}\cdot\boldsymbol{e_\theta} = i \boldsymbol{\sigma}\cdot\boldsymbol{e_r} \sigma_3$ holds. We obtain
\begin{equation}\label{eq:polar_dirac}
	-\ii\boldsymbol{\sigma}\cdot\nabla= 
		-\ii \boldsymbol{\sigma}\cdot \left(\boldsymbol{e_r} \partial_r + \frac{1}{r}\boldsymbol{e_\theta}\partial_\theta \right)
%	 =	-\ii \boldsymbol{\sigma}\cdot\boldsymbol{e_r} \left(\partial_r  +\frac{1}{2r}-\frac{1-2i \sigma_3 \partial_\theta}{2r}  \right)
	 = -\ii \boldsymbol{\sigma}\cdot\boldsymbol{e_r} \left(\partial_r  +\frac{1}{2r}-\frac{K_\omega}{r}  \right)
\end{equation}
where $K_\omega$ is the \emph{spin-orbit operator}
\[
K_\omega := \frac{1}{2}\mathbbm{1} - i \sigma_3 \partial_\theta.
\]

In order to decompose appropriately $L^2((0,\omega);\C^2)$, we recall \cite[Lemma 2.4]{PVDB2021} about the properties of $K_\omega$.
\begin{proposition}[Properties of the spin-orbit operator]\label{prop:properties.spin-orbit}
Let $\omega\in(0,2\pi]$, $S_\omega$ as in \eqref{eq:defn.sector} and $\{\lambda_k\}_{k \in \N}$ as in \eqref{eq:def-lambda_k}.
Set 
\begin{equation}\label{eq:def-f_k}
	f_k^+ (\theta) \; := \; \frac{1}{\sqrt{2\omega}} \begin{pmatrix}
				e^{\ii(\lambda_k-\frac{1}{2}) \theta} \\ e^{-\ii(\lambda_k-\frac{1}{2}) \theta}
						\end{pmatrix}, \quad 
	f_k^- (\theta) \; := \; \frac{-\ii}{\sqrt{2\omega}} \begin{pmatrix}
				e^{-\ii(\lambda_k+\frac{1}{2}) \theta} \\ e^{+\ii(\lambda_k+\frac{1}{2}) \theta}
						\end{pmatrix},
		\quad \text{ for }\theta \in (0,\omega).
		\end{equation}
The spin-orbit operator with infinite mass boundary conditions
\[
	\begin{split}
		& K_\omega \;:=\; \frac{1}{2} \mathbbm{1}- \ii \sigma_3 \partial_\theta \, , \\
		& \mathcal{D}(K_\omega) \;:=\; 
			\big\{ 
				\phi=(\phi_1,\phi_2) \in H^1((0,\omega), \mathbb{C}^2) \, : \, 
				\phi_2(\omega)=-e^{\ii \omega} \phi_1(\omega) \, , \,
				\phi_1(0)=\phi_2(0) 
			\big\}\,
			\end{split}
\]
has the following properties:
	\begin{enumerate}[label=$({\roman*})$]
		\item $K_\omega$ is self-adjoint and has a compact resolvent;
		\item $\sigma(K_\omega)=\{ \lambda_k^{\pm}\}_{k \in \mathbb{N}}$;
		\item\label{item:ONB}
		$\{f^+_k, f^-_k\}_{k \in \mathbb{N}}$  is an orthonormal basis of eigenfunctions of $L^2((0,\omega);\mathbb{C}^2)$ with eigenvalues 
	$\{ \lambda_k^+,\lambda_k^-\}_{k \in \mathbb{N}}$; 
		\item $-\ii(\boldsymbol{\sigma} \cdot \boldsymbol{e_r} ) f^\pm_k = \pm  f^\mp_k $ for all $k \in \N$.
	\end{enumerate}
\end{proposition}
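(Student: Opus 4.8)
The plan is to treat $K_\omega$ as a regular first-order differential system on the bounded interval $(0,\omega)$ and to obtain the four claims essentially by explicit computation, the only genuinely structural point being self-adjointness. First I would establish symmetry: for $\phi,\psi\in\mathcal D(K_\omega)$, integrating by parts in $\langle K_\omega\phi,\psi\rangle-\langle\phi,K_\omega\psi\rangle$ leaves only the boundary term $\ii\big[\overline{\phi_1}\psi_1-\overline{\phi_2}\psi_2\big]_0^\omega$ (the multiplication by $\tfrac12\mathbbm{1}$ is self-adjoint and contributes nothing, while $\sigma_3=\mathrm{diag}(1,-1)$ produces the relative sign). The condition $\phi_1(0)=\phi_2(0)$, together with the same for $\psi$, kills the endpoint $\theta=0$; and $\phi_2(\omega)=-e^{\ii\omega}\phi_1(\omega)$ gives $\overline{\phi_2(\omega)}\psi_2(\omega)=\overline{\phi_1(\omega)}\psi_1(\omega)$, killing $\theta=\omega$. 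Hence $K_\omega$ is symmetric.

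Next, self-adjointness and property (i). The maximal operator acts by the same expression on all of $H^1((0,\omega);\mathbb C^2)$, and the symmetry computation shows $K_\omega$ is a restriction of it by two boundary conditions. The minimal operator (the closure of the expression on $C^\infty_c((0,\omega);\mathbb C^2)$) has deficiency indices $(2,2)$, since for any $z$ the equation $(K_\omega^*-z)u=0$ has a two-dimensional solution space whose elements are smooth exponentials, all in $L^2$ on the bounded interval. Self-adjoint realisations therefore correspond to two-dimensional subspaces of the boundary data $\mathbb C^4$ that are isotropic (Lagrangian) for the boundary form $\ii\big[\overline{\phi_1}\psi_1-\overline{\phi_2}\psi_2\big]_0^\omega$; the previous paragraph is precisely the statement that the subspace cut out by $\phi_1(0)=\phi_2(0)$ and $\phi_2(\omega)=-e^{\ii\omega}\phi_1(\omega)$ is isotropic, and being of dimension two it is maximal. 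Thus $\mathcal D(K_\omega^*)=\mathcal D(K_\omega)$. I expect this to be the crux: one must check that exactly two conditions are imposed and that they are compatible with isotropy, so that no proper extension or restriction survives. Compactness of the resolvent is then immediate, since $\mathcal D(K_\omega)\subset H^1((0,\omega);\mathbb C^2)$, which embeds compactly into $L^2((0,\omega);\mathbb C^2)$ by Rellich's theorem on the bounded interval.

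For (ii) and the identification of the eigenfunctions I would solve $K_\omega f=\mu f$ directly. Because $\sigma_3$ is diagonal the system decouples into $f_1'=\ii(\mu-\tfrac12)f_1$ and $f_2'=-\ii(\mu-\tfrac12)f_2$, with solutions $f_1=c_1e^{\ii(\mu-\frac12)\theta}$, $f_2=c_2e^{-\ii(\mu-\frac12)\theta}$. The condition at $\theta=0$ forces $c_1=c_2$, and the condition at $\theta=\omega$ then reduces to the quantisation $e^{-2\ii(\mu-\frac12)\omega}=-e^{\ii\omega}$, i.e. $2\mu\omega\in-\pi+2\pi\mathbb Z$, which is exactly $\mu\in\{\pm\lambda_k\}_{k\in\N}$ with $\lambda_k=\frac{(2k+1)\pi}{2\omega}$. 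Substituting $\mu=\lambda_k$ and $\mu=-\lambda_k$ recovers, up to the stated normalisation, $f_k^+$ and $f_k^-$; this gives $\sigma(K_\omega)=\{\lambda_k^\pm\}$ with $\lambda_k^+=\lambda_k$ and $\lambda_k^-=-\lambda_k$.

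Finally, (iii) and (iv) are direct verifications. Orthonormality of $\{f_k^\pm\}$ follows by integrating products of exponentials over $(0,\omega)$: in every inner product the surviving frequency is a nonzero integer multiple of $\pi/\omega$ — namely $(j-k)\pi/\omega$ in $\langle f_k^+,f_j^+\rangle$ and $\langle f_k^-,f_j^-\rangle$, and $(k+j+1)\pi/\omega$ in the cross term $\langle f_k^+,f_j^-\rangle$ — so all off-diagonal integrals vanish, while the factor $1/\sqrt{2\omega}$ normalises the diagonal to $1$. Completeness is then automatic: a self-adjoint operator with compact resolvent has a complete orthonormal basis of eigenfunctions, and the ODE computation above has exhibited \emph{all} of them. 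Property (iv) is the single matrix multiplication $-\ii(\boldsymbol{\sigma}\cdot\boldsymbol{e_r})f_k^\pm$ using $\boldsymbol{\sigma}\cdot\boldsymbol{e_r}=\left(\begin{smallmatrix}0 & e^{-\ii\theta}\\ e^{\ii\theta}&0\end{smallmatrix}\right)$, which shifts the exponents $\lambda_k\mp\tfrac12$ into $\lambda_k\pm\tfrac12$ and, after accounting for the $-\ii$ normalisation of $f_k^-$, returns $\pm f_k^\mp$.
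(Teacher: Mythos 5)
Your proof is correct, but there is nothing in the paper to measure it against: the authors do not prove this proposition at all, they recall it verbatim from the literature (it is Lemma 2.4 of the cited work \cite{PVDB2021}), and the appendix containing it is explicitly presented as a summary of known facts. What your argument buys is a self-contained verification, and it is the natural one: symmetry by integration by parts, with the boundary term $\ii\bigl[\overline{\phi_1}\psi_1-\overline{\phi_2}\psi_2\bigr]_0^\omega$ annihilated by the two boundary conditions (at $\theta=\omega$ because $\overline{(-e^{\ii\omega})}(-e^{\ii\omega})=1$); self-adjointness because those two conditions cut out a two-dimensional isotropic subspace of the endpoint space $\C^4$, which is automatically maximal isotropic since the boundary form has signature $(2,2)$; the spectrum from the decoupled ODE system, whose boundary conditions force $c_1=c_2$ and then $e^{-2\ii\mu\omega}=-1$, i.e.\ $\mu\in\{\pm\lambda_k\}_{k\in\N}$; and items (iii), (iv) by direct computation --- I checked the phases, and the $-\ii$ normalisation of $f_k^-$ is exactly what makes (iv) come out as $\pm f_k^\mp$ with no stray factors, while the cross frequencies $(k+j+1)\pi/\omega$ indeed integrate to zero over $(0,\omega)$. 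Three small points to tighten if you write this out in full. First, compactness of the resolvent needs the embedding of $\D(K_\omega)$ \emph{with the graph norm} into $L^2$ to be compact, not merely the set inclusion $\D(K_\omega)\subset H^1$; this is one line, since $\|\phi'\|_{L^2}=\|\sigma_3\phi'\|_{L^2}\leq\|K_\omega\phi\|_{L^2}+\tfrac12\|\phi\|_{L^2}$ shows the graph norm dominates the $H^1$ norm. Second, the dictionary between self-adjoint restrictions of the maximal operator and maximal isotropic subspaces of the boundary form is standard for regular first-order systems on a compact interval, but it is the one genuinely non-computational ingredient, so it deserves a citation or a short von Neumann argument built on your deficiency-index count $(2,2)$. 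Third, orthogonality in (iii) is free of charge: all the eigenvalues $\pm\lambda_k$ are pairwise distinct and $K_\omega$ is symmetric, so only the normalisation actually requires the explicit integrals.
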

%\begin{proof}
%The proof can be straightforwardly adapted from the proof of \cite[Lemma 2.1]{LTOB2018}.\footnote{\biagio{Fabio, is there a more precise reference from your paper with Hanne?}}
%\end{proof}

In the following proposition we finally decompose the space $L^2(S_\omega;\C^2)$ in partial wave subspaces.
\begin{proposition}[Decomposition in partial wave subspaces]\label{prop:decomposition.partialwave}
	Let $\omega\in (0,2\pi]$, $S_\omega$ be defined as in \eqref{eq:defn.sector} and for all $k\in \mathbb{N}$ let $\lambda_k^{\pm}$ as in \eqref{eq:def-lambda_k} and $f_k^{\pm}$ as in \eqref{eq:def-f_k}.
	Then, 
\[
	L^2(S_\omega;\C^2) 
	\,\cong\,
	\bigoplus_{k \in \N}
	\left[
	L^2((0,+\infty), \ud r)\otimes  \operatorname{span}\{f_k^+, f_k^-\}\right],
\]
i.e.~for any $\psi\in L^2(S_\omega;\C^2)$ there exists $\{(u^+_k,u^-_k)\}_{k\in\N}\in L^2((0,\infty),\ud r)\oplus L^2((0,\infty),\ud r)$ such that
	\begin{equation}\label{eq:rappr.psi.radial}
	\psi(r,\theta)=\frac{1}{\sqrt{r}}\sum_{k\in \N}
	\left[u_k^+(r) f_k^+(\theta)+ u_k^-(r) f_k^-(\theta)\right]
	\quad \text{ for a.a. } r\in (0,+\infty), \theta \in (0,\omega)
	\end{equation}
	and
\[
\Vert \psi\Vert_{L^2(S_\omega;\C^2)}^2 = 
\sum_{k \in \N} \left[ \Vert u_k^+\Vert_{L^2((0,+\infty),\ud r)}^2 + \Vert u_k^-\Vert_{L^2((0,+\infty),\ud r)}^2 \right].
\]
	\end{proposition}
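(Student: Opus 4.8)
The plan is to reduce the claim to two ingredients already in hand: the radial--angular splitting of the ambient Hilbert space recorded in \eqref{eq:equivL2}, and the fact that $\{f_k^+, f_k^-\}_{k\in\N}$ is an orthonormal basis of $L^2((0,\omega);\C^2)$, which is exactly \Cref{prop:properties.spin-orbit} \ref{item:ONB}. Conceptually the statement is nothing more than the standard Hilbert-space identity that, if $\{e_j\}$ is an orthonormal basis of a Hilbert space $\mathcal{G}$, then $\mathcal{F}\otimes\mathcal{G} \cong \bigoplus_j \mathcal{F}\otimes \operatorname{span}\{e_j\}$, applied with $\mathcal{F} = L^2((0,+\infty),\ud r)$ and $\mathcal{G} = L^2((0,\omega);\C^2)$; the content of the proposition is to make this abstract decomposition concrete in the coordinate $\psi(r,\theta)$ and to track the normalisation factor $1/\sqrt{r}$.

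First I would invoke the unitary identification \eqref{eq:equivL2}, realised by multiplication with $\sqrt{r}$ so as to absorb the polar Jacobian $r\,\ud r\,\ud\theta$: writing $\varphi(r,\theta):=\sqrt{r}\,\psi(r\cos\theta,r\sin\theta)$ gives an isometry of $L^2(S_\omega;\C^2)$ onto $L^2((0,+\infty),\ud r)\otimes L^2((0,\omega);\C^2)$. From here I work entirely in the product space and only transport the final formula back through $\psi = \varphi/\sqrt{r}$, which is precisely what produces the prefactor $1/\sqrt{r}$ in \eqref{eq:rappr.psi.radial}.

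Next I would observe that for almost every fixed $r$ the section $\theta\mapsto\varphi(r,\theta)$ belongs to $L^2((0,\omega);\C^2)$, so by \Cref{prop:properties.spin-orbit} \ref{item:ONB} it expands as
\[
\varphi(r,\cdot) = \sum_{k\in\N}\big[\, u_k^+(r)\, f_k^+ + u_k^-(r)\, f_k^-\,\big], \qquad u_k^\pm(r) := \langle \varphi(r,\cdot), f_k^\pm\rangle_{L^2((0,\omega);\C^2)},
\]
with the pointwise-in-$r$ Parseval identity $\|\varphi(r,\cdot)\|_{L^2((0,\omega))}^2 = \sum_{k}\big(|u_k^+(r)|^2 + |u_k^-(r)|^2\big)$. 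Integrating this identity over $r\in(0,+\infty)$ and using Tonelli's theorem to interchange the sum and the integral yields
\[
\|\psi\|_{L^2(S_\omega;\C^2)}^2 = \|\varphi\|^2 = \sum_{k\in\N}\big(\|u_k^+\|_{L^2((0,+\infty),\ud r)}^2 + \|u_k^-\|_{L^2((0,+\infty),\ud r)}^2\big),
\]
which simultaneously certifies $(u_k^+,u_k^-)\in L^2\oplus L^2$ and supplies the norm identity of the statement. Transporting the expansion back through $\psi=\varphi/\sqrt{r}$ gives \eqref{eq:rappr.psi.radial}, and the mutual orthogonality of the $f_k^\pm$ identifies the summands with the orthogonal decomposition $\bigoplus_{k}[L^2((0,+\infty),\ud r)\otimes\operatorname{span}\{f_k^+,f_k^-\}]$.

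The main obstacle, though a mild one, is the measurability and Fubini/Tonelli bookkeeping rather than any genuine analytic difficulty: one must confirm that each coefficient map $r\mapsto u_k^\pm(r)$ is measurable (this follows from the joint measurability of $\varphi$ together with Fubini applied to the inner-product integral), and that the pointwise-in-$r$ Parseval identity may legitimately be integrated term by term, so that the series in \eqref{eq:rappr.psi.radial} converges in $L^2(S_\omega;\C^2)$ and not merely pointwise. Since all of this is entirely standard once the isometry \eqref{eq:equivL2} and the orthonormal basis \Cref{prop:properties.spin-orbit} \ref{item:ONB} are in place, it is natural to keep the presentation light and defer the routine verifications.
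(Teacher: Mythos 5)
Your proposal is correct and follows exactly the paper's own route: the paper's proof is a one-line appeal to the identification \eqref{eq:equivL2} together with the orthonormal-basis property \Cref{prop:properties.spin-orbit} \ref{item:ONB}, and you have simply filled in the standard fibre-wise Parseval/Tonelli details of that same argument. As a minor aside, your normalisation $\varphi=\sqrt{r}\,\psi$ is the correct one for the unitarity claimed in \eqref{eq:equivL2} (the paper's displayed definition of $\varphi$ carries the factor $r$ rather than $\sqrt{r}$, an evident typo), and it is what produces the prefactor $1/\sqrt{r}$ in \eqref{eq:rappr.psi.radial}.
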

	\begin{proof}
	The proof is immediate from \eqref{eq:equivL2} and \ref{item:ONB} of \Cref{prop:properties.spin-orbit}, .
	\end{proof}
Thanks to \Cref{prop:decomposition.partialwave}, it is possible to decompose the Dirac operator $H_\text{min}$ %and $H_\text{max}$ 
defined in \eqref{eq:def-H_min} %and \eqref{eq:def-H_max} respectively 
as the direct sum of the one dimensional Dirac operators on the half-line with Coulomb potentials $h_{\nu,k}$ %and $h_{\nu,k}^*$, 
defined in \eqref{eq:def-h_k}. % and \eqref{eq:def-h_{\nu,k}^*} respectively. 
The following proposition implies \Cref{prop:radial.decomposition}.
\begin{proposition}\label{prop:UnitaryTransformation}
Let $\omega \in (0,2\pi]$, $S_\omega$ defined as in \eqref{eq:defn.sector}, $H_\text{min}$ defined as in \eqref{eq:def-H_min} and for all $k \in \N$, let $h_{\nu,k}$  be defined as in \eqref{eq:def-h_k}. Then, if $m=0$,
\[
		 H_{\operatorname{min}} \;\cong\; \bigoplus_{k \in \N} h_{\nu,k}\
\]
	where `` $\cong$ '' means that the operators are unitarily equivalent. 

In detail, for $\psi \in L^2(S_\omega;\C^2)$ there exists $\{(u^+_k,u^-_k)\}_{k\in\N}\subset L^2((0,\infty),\ud r)\oplus L^2((0,\infty),\ud r)$ such that  \eqref{eq:rappr.psi.radial} holds true and
	\begin{equation}
	\label{eq:char.dom.min}
	\psi \in \operatorname{dom} H_\text{min} \iff (u_k^+, u_k^-) \in \operatorname{dom} h_{\nu,k} = C^\infty_c\left ((0,+\infty);\C^2\right) \text{ for all }k \in \N.
	\end{equation}
Moreover
\begin{equation}\label{eq:compute.HV}
\left(D_0+\frac{\nu}{|x|}\mathbbm{1}_2\right) \psi (r,\theta) = \frac{1}{\sqrt{r}}\sum_{k\in \N}
	\left[\widetilde{u}_k^+(r) f_k^+(\theta)+ \widetilde{u}_k^-(r) f_k^-(\theta)\right], \quad \text{ for a.a. }r\in(0,+\infty), \theta \in(0,\omega),
\end{equation}
with
\begin{equation}\label{eq:compute.HV.2}
\begin{pmatrix} \widetilde{u}_k^+ \\ \widetilde{u}_k^- \end{pmatrix} = d_{\nu,k} \begin{pmatrix} u_k^+ \\ u_k^-\end{pmatrix}.
\end{equation}
\end{proposition}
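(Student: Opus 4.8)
The plan is to obtain the statement from the partial wave decomposition of \Cref{prop:decomposition.partialwave} together with a single explicit computation of the operator in the radial/angular coordinates. \Cref{prop:decomposition.partialwave} already realises the unitary $U\colon L^2(S_\omega;\C^2)\to\bigoplus_{k\in\N}L^2((0,+\infty),\ud r)^2$ that sends $\psi$ to the sequence of its radial coefficients $\{(u_k^+,u_k^-)\}_{k\in\N}$; this is exactly the expansion \eqref{eq:rappr.psi.radial}, and it already carries the Parseval identity. Thus it only remains (i) to compute $U(D_0+\frac{\nu}{|x|}\mathbbm{1}_2)U^{-1}$ and to show it acts as $\bigoplus_k d_{\nu,k}$, and (ii) to match the domains, i.e.~to prove \eqref{eq:char.dom.min}.

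For (i) I would insert \eqref{eq:rappr.psi.radial} into the polar form \eqref{eq:polar_dirac} of $-\ii\boldsymbol{\sigma}\cdot\nabla$ and act termwise on $\frac{1}{\sqrt r}u_k^\pm(r)f_k^\pm(\theta)$. The weight $r^{-1/2}$ is tailored to the first-order part: one checks $(\partial_r+\frac{1}{2r})(r^{-1/2}u_k^\pm)=r^{-1/2}(u_k^\pm)'$, so the $\frac{1}{2r}$ term disappears. The angular factor is dealt with by the two spectral properties of the spin-orbit operator: the eigenvalue identities $K_\omega f_k^{\pm}=\pm\lambda_k f_k^{\pm}$, which turn $-\frac{K_\omega}{r}$ into $\mp\frac{\lambda_k}{r}$, and item (iv) of \Cref{prop:properties.spin-orbit}, $-\ii(\boldsymbol{\sigma}\cdot\boldsymbol{e_r})f_k^{\pm}=\pm f_k^{\mp}$, which swaps the two channels. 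Collecting the coefficients of $f_k^+$ and $f_k^-$ and adding the diagonal term $\frac{\nu}{r}\mathbbm{1}_2$ reproduces precisely $d_{\nu,k}(u_k^+,u_k^-)^{\top}$ of \eqref{eq:def-d_k}; since the partial wave subspaces are mutually orthogonal and invariant, this yields \eqref{eq:compute.HV}--\eqref{eq:compute.HV.2}, and the differential action of $H_{\mathrm{min}}$ is carried to $\bigoplus_k h_{\nu,k}$.

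The delicate point, and the one I expect to be the main obstacle, is the domain identification \eqref{eq:char.dom.min}. In the direction $\Rightarrow$ one writes $u_k^\pm(r)=\sqrt r\int_0^\omega\langle f_k^\pm(\theta),\psi(r,\theta)\rangle\,\ud\theta$ and transfers regularity and support: joint smoothness of $\psi$ on $\overline{S_\omega}\setminus\{0\}$ gives smoothness of $u_k^\pm$ in $r$ by differentiation under the integral, while compactness of $\operatorname{supp}\psi$ in $\overline{S_\omega}\setminus\{0\}$ confines the $r$-support to a compact subset of $(0,+\infty)$, so each $(u_k^+,u_k^-)\in C^\infty_c((0,+\infty);\C^2)$. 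The genuinely non-formal ingredient is the boundary condition: it is already encoded in the construction, because the $f_k^\pm$ belong to $\mathcal{D}(K_\omega)$, whose conditions at $\theta=0,\omega$ are exactly the angular transcription of $\mathcal{B}_\nn\psi=\psi$; hence for $\psi\in\mathcal{D}(\Hmin)$ the profile $\theta\mapsto\sqrt r\,\psi(r,\theta)$ lies in $\mathcal{D}(K_\omega)$ for each $r$, and conversely any function reassembled from the $f_k^\pm$ automatically satisfies the infinite mass condition. The converse inclusion $\Leftarrow$ is then immediate for finite combinations, and the only care needed is to record that such finite combinations form a common core on both sides, so that the unitary equivalence passes to the closures --- which is all that is subsequently used to compute the deficiency indices.
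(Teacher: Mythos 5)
Your proposal is correct and takes essentially the same approach as the paper's own proof, which likewise inserts the expansion \eqref{eq:rappr.psi.radial} into the polar form \eqref{eq:polar_dirac}, cancels the $\frac{1}{2r}$ term via $\left(\partial_r+\frac{1}{2r}\right)r^{-1/2}=r^{-1/2}\partial_r$, and applies the eigenvalue and swap identities of \Cref{prop:properties.spin-orbit} to recover $d_{\nu,k}$, while treating the domain identification \eqref{eq:char.dom.min} as immediate from $f_k^\pm\in C^\infty_c([0,\omega];\C^2)$. Your extra care on the converse direction --- working with finite combinations and recording that they form a common core so the equivalence passes to the closures --- is if anything more precise than the paper's one-line treatment.
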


%\begin{proposition}\label{prop:UnitaryTransformation}
%Let $\omega \in (0,2\pi]$, $S_\omega$ defined as in \eqref{eq:defn.sector}, $H_\text{min}$ and $H_\text{max}$ defined as in \eqref{eq:def-H_min} and \eqref{eq:def-H_max} respectively. For all $k \in \N$ let $h_k$ and $h_k^*$ be defined as in \eqref{eq:def-h_k} and \eqref{eq:def-h_k^*} respectively. Then
%	\begin{equation}
%		 H_{\operatorname{min}} \;\cong\; \bigoplus_{k \in \N} h_k\
%		 \quad\text{and}\quad
%		 H_{\operatorname{max}}\;\cong\; \bigoplus_{k \in\N} h_k^*,
%	\end{equation}
%	where `` $\cong$ '' means that the operators are unitarily equivalent. 
%
%In detail, for $\psi \in L^2(S_\omega;\C^2)$ there exists $(u^+_k,u^-_k)_{k\in\N}\subset L^2((0,\infty),\ud r)\oplus L^2((0,\infty),\ud r)$ as in \eqref{eq:rappr.psi.radial} and
%	\begin{align}
%	\label{eq:char.dom.min}
%	\psi \in \operatorname{dom} H_\text{min} &\iff (u_k^+, u_k^-) \in \operatorname{dom} h_k \text{ for all }k \in \N,\\
%	\label{eq:char.dom.max}
%	\psi \in \operatorname{dom} H_\text{max} &\iff (u_k^+, u_k^-) \in \operatorname{dom} h_k^* \text{ for all }k \in \N,
%	\end{align}
%and 
%\begin{equation}\label{eq:compute.HV}
%H_V \psi (r,\theta) = \frac{1}{\sqrt{r}}\sum_{k\in \N}
%	\left[\widetilde{u}_k^+(r) f_k^+(\theta)+ \widetilde{u}_k^-(r) f_k^-(\theta)\right], \quad \text{ for a.a. }r\in(0,+\infty), \theta \in(0,\omega),
%\end{equation}
%with
%\begin{equation}\label{eq:compute.HV.2}
%\begin{pmatrix} \widetilde{u}_k^+ \\ \widetilde{u}_k^- \end{pmatrix} = d_k \begin{pmatrix} u_k^+ \\ u_k^-\end{pmatrix}.
%\end{equation}
%\end{proposition}
\begin{proof}
The equivalence in \eqref{eq:char.dom.min} is immediate, since $f_k^\pm \in C_c^{\infty}([0,\omega];\C^2)$. 
Using \eqref{eq:polar_dirac} and the fact that
\[
\left(\partial_r + \frac{1}{2r}\right)\frac{1}{\sqrt{r}}  = 
\frac{1}{\sqrt{r}} \partial_r,
\]
we compute
\[
\begin{split}
\bigg(D_0+\frac{\nu}{|x|}&\mathbbm{1}_2\bigg) \psi(r,\theta)  = 
-\ii (\boldsymbol{\sigma} \cdot \boldsymbol{e_r}  )\left(\partial_r +\frac{1}{2r} -\frac{K_\omega}{r}\right) \frac{1}{\sqrt{r}}
\sum_{k \in \N} 
\left[u_k^+(r) f_k^+(\theta)+ u_k^-(r) f_k^-(\theta)\right]
\\ & = \frac{-\ii (\boldsymbol{\sigma} \cdot \boldsymbol{e_r})}{\sqrt{r}} 
\sum_{k \in \N} 
\left[\partial_r u_k^+(r) f_k^+(\theta)+ \partial_r u_k^-(r) f_k^-(\theta)
-\frac{\lambda_k}{r} u_k^+(r) f_k^+(\theta)+ \frac{\lambda_k}{r} u_k^-(r) f_k^-(\theta)\right]
\\ & = 
\frac{1}{\sqrt{r}} 
\sum_{k \in \N} 
\left[\partial_r u_k^+(r) f_k^-(\theta)- \partial_r u_k^-(r) f_k^+(\theta)
-\frac{\lambda_k}{r} u_k^+(r) f_k^-(\theta) -\frac{\lambda_k}{r} u_k^-(r) f_k^+(\theta)
\right],
\end{split}
\]
From this \eqref{eq:compute.HV} and \eqref{eq:compute.HV.2} follow.
\end{proof}

\section*{Acknowledgements}

The idea of this project came out during the ``International Congress in Mathematical Physics'' held in Geneva in August 2021. We would like to express our gratitude to the organizers of the conference. This work was partially supported by the Istituto Nazionale di Alta Matematica (INdAM), the MIUR-PRIN 2017 project MaQuMA cod.~2017ASFLJR. This work was partially developed when F.~P.~was  employed at CNRS \& CEREMADE - Université Paris Dauphine and he was partially supported by the project ANR-17-CE29-0004 molQED of the Agence Nationale de la Recherche. He has also received funding from the and by European Research Council (ERC) under the European Union’s Horizon 2020 research and innovation programme (grant agreement MDFT No.~725528 of Mathieu Lewin).

\end{document}